\newif\ifpictures
\newif\ifjournal
\newif\ifArXiv
\numberwithin{equation}{section}
\newtheorem{thm}{Theorem}
\newtheorem{prop}[thm]{Proposition}
\newtheorem{lemma}[thm]{Lemma}
\newtheorem{cor}[thm]{Corollary}
\theoremstyle{definition}
\newtheorem{definition}[thm]{Definition}
\newtheorem{example}[thm]{Example}
\numberwithin{thm}{section}
\newcounter{FNC}[page]
\def\newfootnote#1{{\addtocounter{FNC}{2}$^\fnsymbol{FNC}$%
     \let\thefootnote\relax\footnotetext{$^\fnsymbol{FNC}$#1}}}
\newcommand{\N}{\mathbb{N}}
\newcommand{\R}{\mathbb{R}}
\newcommand{\Z}{\mathbb{Z}}
\newcommand{\Ra}{\Rightarrow}
\newcommand{\Lera}{\Leftrightarrow}
\newcommand\cK{{\ensuremath{\mathcal{K}}}\xspace}
\newcommand\cL{{\ensuremath{\mathcal{L}}}\xspace}
\newcommand{\eps}{\varepsilon}
\newcommand{\alp}{\alpha}
\newcommand{\lam}{\lambda}
\newcommand{\Sig}{\Sigma}
\definecolor{DarkGreen}{rgb}{0,0.65,0}
\newcommand{\struc}[1]{{\color{blue} #1}}
\DeclareMathOperator{\conv}{conv}
\DeclareMathOperator{\supp}{supp}
\DeclareMathOperator{\New}{New}
\DeclareMathOperator{\Circ}{Circ}
\DeclareMathOperator{\Int}{int}
\DeclareMathOperator{\prep}{Prep}
\def\endexa{\hfill$\hexagon$}
\title{A Positivstellensatz for Sums of Nonnegative Circuit Polynomials}
\author{Mareike Dressler} \author{Sadik Iliman} \author{Timo de Wolff}
\address{Mareike Dressler, Goethe-Universit\"at, FB 12 -- Institut f\"ur Mathematik,
Postfach 11 19 32, 60054 Frankfurt am Main, Germany\medskip}
\email{dressler@math.uni-frankfurt.de}
\address{Sadik Iliman, Frankfurt am Main, Germany\medskip}
\email{iliman@gmx.de}
\address{Timo de Wolff, Texas A\&M University, Department of Mathematics, College Station, TX 77843-3368, 
 USA\medskip}
\email{dewolff@math.tamu.edu }
\subjclass[2010]{Primary: 14P10, 90C25; Secondary: 12D05, 52B20}
\keywords{
Certificate, converging hierarchy, nonnegative polynomial, Positivstellensatz, relative entropy programming, semidefinite programming, sum of nonnegative circuit polynomials, sum of squares}
\begin{document}

\begin{abstract}
Recently, the second and the third author developed sums of nonnegative circuit polynomials (SONC) as a new certificate of nonnegativity for real polynomials, which is independent of sums of squares. 

In this article we show that the SONC cone is full-dimensional in the cone of nonnegative polynomials. We establish a Positivstellensatz which guarantees that every polynomial which is positive on a given compact, semi-algebraic set can be represented by the constraints of the set and SONC polynomials. Based on this Positivstellensatz we provide a hierarchy of lower bounds converging against the minimum of a polynomial on a given compact set $K$. Moreover, we show that these new bounds can be computed efficiently via interior point methods using results about relative entropy functions. 
\end{abstract}

\maketitle

\section{Introduction}

In this article we present a \struc{\textit{Positivstellensatz}} based on \struc{\textit{sums of nonnegative circuit polynomials}} providing an entirely new way to certify nonnegativity of polynomials on an arbitrary compact, semi-algebraic set. This Positivstellensatz yields a converging hierarchy of lower bounds for solving arbitrary constrained polynomial optimization problems on compact sets. We show that these bounds can be computed efficiently via \struc{\textit{relative entropy programming}}. Particularly, all results are independent of sums of squares and semidefinite programming.\\

Let $\struc{f, g_1, \ldots,g_s}$ be elements of the polynomial ring $\struc{\R[\mathbf{x}]}=\R[x_1,\ldots,x_n]$ and let
\begin{eqnarray*}
\struc{K} \ = \ \{\mathbf{x} \in \mathbb R^n \ : \ g_i(\mathbf{x}) \geq 0,\,\, i = 1,\ldots,s\}	
\end{eqnarray*}
be a \struc{\textit{basic closed semi-algebraic set}} defined by $g_1,\ldots,g_s$. We consider the \struc{\textit{constrained polynomial optimization problem}}
\begin{eqnarray*}
\struc{f_K^*} \ = \ \inf_{\mathbf{x} \in K}^{}f(\mathbf{x}). 
\end{eqnarray*}
For $K = \R^n$ we write $\struc{f^*}$ for $f_{\R^n}^*$ and talk about a \struc{\textit{global (polynomial) optimization problem}}.

Constrained polynomial optimization problems are well-known to be NP-hard in general \cite{Dickinson:Gijben}. However, they have a wide range of applications
\ifArXiv
like dynamical systems, robotics, control theory, computer vision, signal processing, and economics
\fi
; see, e.g., \cite{Blekherman:Parrilo:Thomas,Lasserre:BookNonnegativeApplications}.

The standard approach for the computation of $f_K^*$ is \struc{\textit{Lasserre relaxation}} \cite{Lasserre:FirstLasserreRelaxation}, which approximates nonnegative polynomials via  \struc{\textit{sum of squares (SOS)}} polynomials and \struc{\textit{semidefinite programming (SDP)}}; for further details see \cite{Blekherman:Parrilo:Thomas,Laurent:Survey}.\\


Recently, the second and the third author developed new nonnegativity certificates independent of SOS \cite{Iliman:deWolff:Circuits}, which are based on \struc{\textit{circuit polynomials}}, see Definition \ref{Def:CircuitPolynomial}. For large classes of polynomials one can check membership in the convex cone of sums of nonnegative circuit polynomials via \struc{\textit{geometric programming (GP)}}, a special type of convex optimization problems; see e.g. \cite{Boyd:GP,Boyd:CO}. This is in direct analogy to the relation between SOS and SDP.

Using Lasserre relaxation, the corresponding semidefinite programs quickly get very large in size, which often is an issue for problems with high degrees or many variables. The SONC/GP based approach allows a \textit{significantly} faster computation of lower bounds than their counterparts in semidefinite programming for all classes of polynomials which have been investigated so far; see \cite[Sections 4 and 5]{Dressler:Iliman:deWolff}, \cite[Tables 1-3, Page 470]{Ghasemi:Marshall:GP} and \cite[Section 4.1]{Iliman:deWolff:Circuits}. In several cases the new bounds are also better than the optimal bounds based on SOS and SDP; see \cite[Corollary 3.6]{Iliman:deWolff:Circuits}.  However, the authors derive a lower bound for $f_K^*$ by using only a single geometric optimization program. In this article, we extend this approach by developing a \textit{hierarchy} of lower bounds, which converge to the optimal value of the polynomial optimization problem.\\

A necessary condition to establish SONC polynomials as a certificate, which is useful in practice, is to show that the convex cone of SONC polynomials is always full-dimensional in the convex cone of nonnegative polynomials. We show this in Theorem \ref{Thm:SONCConeFullDimensional}. Moreover, we present a new \struc{\textit{Positivstellensatz for sums of nonnegative circuit polynomials}}; see Theorem \ref{Thm:OurPositivstellensatz}. The following statement is a rough version.

\begin{thm}[Positivstellensatz for SONC polynomials; rough version]
Let $f \in \R[\mathbf{x}]$ be a real polynomial which is strictly positive on a given compact, basic closed semi-algebraic set $K$ defined by polynomials $g_1,\ldots,g_s \in \R[\mathbf{x}]$. Then there exists an explicit representation of $f$ as a sum of products of the $g_i$'s and SONC polynomials.
\end{thm}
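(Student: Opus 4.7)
The plan is to parallel Schmüdgen's Positivstellensatz, systematically substituting SONC polynomials for sums of squares, with the full-dimensionality of the SONC cone (Theorem~\ref{Thm:SONCConeFullDimensional}) providing the crucial cone-theoretic input. As preliminary reductions, strict positivity of $f$ and compactness of $K$ yield a uniform lower bound $f \geq 2\epsilon > 0$ on $K$, and $K \subseteq [-N,N]^n$ for some $N > 0$. The box polynomials $N^2 - x_i^2$ are nonnegative on $K$, so one may first treat them as auxiliary constraints and at the end of the argument absorb them into the original preordering by a standard Stengle-type representation.

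Next, I would construct a SONC dominator of $f$: a polynomial $\tau$ satisfying $\tau - f \geq \epsilon$ on $K$. A simple candidate is $\tau = C\bigl(1 + \sum_{i=1}^n x_i^{2d}\bigr)$ with $C > \max_K |f| + \epsilon$ and any $d \geq 1$. Every monomial of $\tau$ is a monomial square, hence a trivial nonnegative circuit polynomial, so $\tau$ is SONC and moreover $\tau \pm f > 0$ on $K$. Both $\tau + f$ and $\tau - f$ therefore become candidates for a direct SONC-style certificate on the box.

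The inductive heart of the proof is a box-level SONC certificate: every polynomial $p$ strictly positive on $[-N,N]^n$ admits a representation $p = \sigma_0 + \sum_{i=1}^n \sigma_i (N^2 - x_i^2)$ with SONC coefficients $\sigma_0, \sigma_i$. I would establish this by perturbation inside the SONC cone: by Theorem~\ref{Thm:SONCConeFullDimensional}, SONC has nonempty interior in the cone of nonnegative polynomials, so for sufficiently small scaling there exists a SONC polynomial $\eta$ with $p - \eta$ still nonnegative on the box. Iterating this perturbation, combined with a compactness and density argument for the set of polynomials positive on the box, should yield the desired decomposition, from which the $K$-level version follows by expressing the box constraints in terms of the original $g_i$'s.

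The main obstacle is preserving the SONC property under multiplication by the $g_i$'s. SONC membership imposes rigid Newton-polytope constraints --- each circuit polynomial has its inner exponent inside a simplex whose vertices carry even exponents --- so a naive degree induction in the style of Schmüdgen does not preserve SONC structure. I expect to overcome this by multiplying each SONC piece by carefully designed SONC ``bump'' polynomials supported on suitably enlarged simplices, relying on the full-dimensionality result to guarantee that the residual error terms can be re-absorbed into the SONC cone. Tracking the coefficients through this construction would then yield the explicit representation asserted by the theorem.
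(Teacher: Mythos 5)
There is a genuine gap at the heart of your argument: the box-level certificate you rely on --- every $p$ strictly positive on $[-N,N]^n$ can be written as $\sigma_0+\sum_{i=1}^n\sigma_i(N^2-x_i^2)$ with SONC $\sigma_i$ --- is precisely a Positivstellensatz of the kind to be proved, and the mechanism you propose does not deliver it. Theorem \ref{Thm:SONCConeFullDimensional} says that $C_{n,2d}$ has nonempty interior inside $P_{n,2d}$, i.e.\ it is a statement about \emph{globally} nonnegative polynomials; it gives no handle on polynomials that are positive only on the box. Subtracting a small SONC polynomial $\eta$ so that $p-\eta$ remains nonnegative on the box leaves you with a problem of exactly the same type (in fact harder, since strict positivity has degraded to nonnegativity), and iterating this produces no finite decomposition --- the appeal to ``compactness and density'' does not close the loop. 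A Schm\"udgen-style induction is likewise blocked, as you note, because it multiplies certificates together and SONC is not closed under products (Lemma \ref{Lem:SONCNotClosedUnderMultiplication}); the ``bump polynomial'' device for re-absorbing the resulting error terms is a hope, not an argument. Your final step is also unsound: a Stengle-type representation expressing the box constraints $N^2-x_i^2$ through the $g_i$'s carries sum-of-squares multipliers (and in general denominators), and since not every square is SONC (the corollary to Lemma \ref{Lem:SONCNotClosedUnderMultiplication}), substituting such a representation destroys the asserted form ``products of the $g_i$'s times SONC polynomials.''

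For comparison, the paper's proof of Theorem \ref{Thm:OurPositivstellensatz} sidesteps both obstacles and uses neither full-dimensionality nor any multiplicative closure of SONC. The cube constraints $l_1,\ldots,l_{2n}$ are simply kept as explicit redundant generators of $K$; one forms the Archimedean preprime $P=\prep(g_1,\ldots,g_s,l_1,\ldots,l_{2n})$ (Archimedean by Lemma \ref{lem:ArchEquivalence}) and the set $M$ of finite sums $\sum s\cdot H$ with $s$ SONC and $H\in R_q(K)$ a product of constraints. $M$ is an Archimedean $P$-module --- closure under multiplication is only required against elements of $P$, which by construction stays inside $M$ --- and $\mathcal{K}_M=K$, so the Krivine/Marshall representation theorem (Theorem \ref{Thm:Representation}) yields $f\in M$ directly. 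If you want to salvage your route, you would have to prove the SONC analogue of the Handelman/Schm\"udgen box theorem from scratch; otherwise the module-theoretic argument is the way the multiplication problem is actually overcome.
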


The proof is based on methods from classical real algebraic geometry, which had been used very similarly by Chandrasekaran and Shah for \struc{\textit{sums of arithmetic geometric exponentials (SAGE)}}; see \cite{Chandrasekaran:Shah:SAGE}. We discuss the relation between the SAGE and the SONC cone in more detail in Section \ref{Sec:SAGEandSONC}.

Our Positivstellensatz yields a hierarchy of lower bounds $\struc{f_{\rm sonc}^{(d,q)}}$ for $f_K^*$ based on the maximal allowed degree of the representing polynomials in the Positivstellensatz. We show in Theorem \ref{Thm:ConvergingHierarchy} that the bounds $f_{\rm sonc}^{(d,q)}$ converge against $f_K^*$ for $d,q \to \infty$.

Finally, we provide in \eqref{Equ:OurRelativeEntropyProgram} an optimization program for the computation of $f_{\rm sonc}^{(d,q)}$. We prove in Theorem \ref{Thm:OurProgramisREP} that our program \eqref{Equ:OurRelativeEntropyProgram} is a \struc{\textit{relative entropy program (REP)}}, a convex optimization program, which is more general than a geometric program, but still efficiently solvable via interior point methods; see \cite{Chandrasekaran:Shah:RelativeEntropyApplications,Nesterov:Nemirovskii}.\\

\section{Preliminaries}
\label{Sec:Preliminaries}

In this section we recall key results about sums of nonnegative circuit polynomials (SONC), sums of arithmetic geometric exponentials (SAGE), 
\ifArXiv
geometric programming (GP),
\fi
and relative entropy programing (REP), which are used in this article. 

\subsection{The Cone of Sums of Nonnegative Circuit Polynomials}
\label{SubSec:PrelimSONC}
We denote vectors in bold notation in general. Let \struc{$\R[\mathbf{x}] = \R[x_1,\ldots,x_n]$} be the ring of real $n$-variate polynomials, $\struc{\R^*} = \R \setminus \{\mathbf{0}\}$, and $\struc{\N^*} = \N \setminus \{\mathbf{0}\}$. Let \struc{$\delta_{ij}$} be the $ij$-Kronecker symbol, $\struc{\mathbf{e}_i}=(\delta_{i1},\ldots,\delta_{in})$ be the $i$-th standard vector, and $\struc{A} \subset \N^n$ be a finite set. We denote by \struc{$\conv(A)$} the convex hull of $A$, by \struc{$V(\conv(A))$} its vertex set, and by \struc{$V(A)$} the vertices of the convex hull of $A$. We consider polynomials $f \in \R[\mathbf{x}]$ supported on $A$. Thus, $f$ is of the form $\struc{f(\mathbf{x})} = \sum_{\boldsymbol{\alp} \in A}^{} f_{\boldsymbol{\alp}}\mathbf{x}^{\boldsymbol{\alp}}$ with $\struc{f_{\boldsymbol{\alp}}} \in \R$, $\struc{\mathbf{x}^{\boldsymbol{\alp}}} = x_1^{\alp_1} \cdots x_n^{\alp_n}$.  We call a lattice point \struc{\textit{even}} if it is in $(2\N)^n$. We define the Newton polytope of $f$ as $\struc{\New(f)} = \conv\{\boldsymbol{\alp} \in \N^n : f_{\boldsymbol{\alp}} \neq 0\}$. Furthermore, we denote by $\struc{\Delta_{n,2d}}$ the standard simplex in $n$ variables of edge length $2d$, i.e. the simplex satisfying $V(\Delta_{n,2d}) = \{\textbf{0},2d \cdot \mathbf{e}_1,\ldots,2d \cdot \mathbf{e}_n\}$ and we define $\struc{\cL_{n,2d}} = \Delta_{n,2d} \cap \Z^n$ as the set of all integer points in $\Delta_{n,2d}$.\\

A polynomial is nonnegative on the entire $\R^n$ only if the following necessary conditions are satisfied; see e.g. \cite{Reznick:ExtremalPSD}.

\begin{prop}
Let $A \subset \N^n$ be a finite set and $f \in \R[\mathbf{x}]$ be supported on $A$ such that $\New(f) = \conv(A)$. Then $f$ is nonnegative on $\R^n$ only if:
\begin{enumerate}
 \item All elements of $V(A)$ are even.
 \item If $\boldsymbol{\alp} \in V(A)$, then the corresponding coefficient $f_{\boldsymbol{\alp}}$ is strictly positive.
\end{enumerate}
In other words, if $\boldsymbol{\alp} \in V(A)$, then the term $f_{\boldsymbol{\alp}} \mathbf{x}^{\boldsymbol{\alp}}$ has to be a \struc{\emph{monomial square}}.
\label{Prop:NecessaryConditions}
\end{prop}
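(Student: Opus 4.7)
The plan is to prove the contrapositive in a local way: for any vertex $\boldsymbol{\alpha}\in V(A)$, I construct a one-parameter family of test points $\mathbf{x}(t)$ along which $f$ is asymptotically governed by the single term $f_{\boldsymbol{\alpha}}\mathbf{x}^{\boldsymbol{\alpha}}$. Nonnegativity of $f$ on $\R^n$ then forces constraints on $\boldsymbol{\alpha}$ and $f_{\boldsymbol{\alpha}}$. The key geometric input is that, since $\boldsymbol{\alpha}$ is a vertex of $\conv(A)$, there exists $\mathbf{w}\in\R^n$ with
\[
\langle \mathbf{w},\boldsymbol{\alpha}\rangle \ < \ \langle \mathbf{w},\boldsymbol{\beta}\rangle \quad\text{for all } \boldsymbol{\beta}\in A\setminus\{\boldsymbol{\alpha}\},
\]
obtained from any hyperplane supporting $\conv(A)$ uniquely at $\boldsymbol{\alpha}$.

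Next, for a sign pattern $\mathbf{s}=(s_1,\ldots,s_n)\in\{-1,+1\}^n$ and $t>0$, I would substitute $x_i = s_i\, t^{w_i}$. Then $\mathbf{x}^{\boldsymbol{\beta}} = \mathbf{s}^{\boldsymbol{\beta}}\, t^{\langle\mathbf{w},\boldsymbol{\beta}\rangle}$, so
\[
t^{-\langle\mathbf{w},\boldsymbol{\alpha}\rangle}\, f(\mathbf{x}(t)) \ = \ f_{\boldsymbol{\alpha}}\mathbf{s}^{\boldsymbol{\alpha}} \ + \sum_{\boldsymbol{\beta}\in A\setminus\{\boldsymbol{\alpha}\}} f_{\boldsymbol{\beta}}\,\mathbf{s}^{\boldsymbol{\beta}}\, t^{\langle\mathbf{w},\boldsymbol{\beta}-\boldsymbol{\alpha}\rangle},
\]
and the exponents $\langle\mathbf{w},\boldsymbol{\beta}-\boldsymbol{\alpha}\rangle$ are strictly positive by the previous step. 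Since $t^{-\langle\mathbf{w},\boldsymbol{\alpha}\rangle}>0$ for $t>0$, the sign of this expression agrees with $\sgn f(\mathbf{x}(t))$. Letting $t\to 0^+$, every sum term vanishes and the limit is $f_{\boldsymbol{\alpha}}\mathbf{s}^{\boldsymbol{\alpha}}$.

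If $f\geq 0$ on $\R^n$, this forces $f_{\boldsymbol{\alpha}}\mathbf{s}^{\boldsymbol{\alpha}}\geq 0$ for every $\mathbf{s}\in\{-1,+1\}^n$. Suppose some coordinate $\alpha_i$ were odd. Flipping only $s_i$ between the all-ones pattern and $-\mathbf{e}_i$-flipped pattern flips the sign of $\mathbf{s}^{\boldsymbol{\alpha}}=\prod_j s_j^{\alpha_j}$, so we would need $f_{\boldsymbol{\alpha}}\geq 0$ and $-f_{\boldsymbol{\alpha}}\geq 0$, i.e.\ $f_{\boldsymbol{\alpha}}=0$, contradicting $\boldsymbol{\alpha}\in A$. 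Hence every $\alpha_i$ is even, proving (1), and then $\mathbf{s}^{\boldsymbol{\alpha}}=1$, which yields $f_{\boldsymbol{\alpha}}>0$ and proves (2).

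I do not expect a real obstacle here; the only thing to be a little careful about is ensuring that the substitution is truly a point in $\R^n$ (which is why one takes $s_i\in\{-1,+1\}$ rather than working in $\R_{>0}^n$), and that the separating $\mathbf{w}$ exists precisely because $\boldsymbol{\alpha}$ is a \emph{vertex}, not just an extreme exponent in some direction — this is the standard property of vertices of convex polytopes and requires no further argument.
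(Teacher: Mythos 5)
Your argument is correct, and since the paper does not prove this proposition at all (it is quoted as a standard fact with a reference to Reznick), your curve-substitution proof is a legitimate self-contained replacement; it is in fact the classical argument: pick a linear functional exposing the vertex, substitute $x_i = s_i t^{w_i}$, and read off the sign of the dominating term as $t \to 0^+$. The only imprecision is your final justification that $f_{\boldsymbol{\alpha}} = 0$ ``contradicts $\boldsymbol{\alpha} \in A$'': membership in the support set $A$ alone does not force a nonzero coefficient in the paper's conventions. What does force it is the hypothesis $\New(f) = \conv(A)$: if $f_{\boldsymbol{\alpha}} = 0$ for a vertex $\boldsymbol{\alpha}$ of $\conv(A)$, then $\New(f) \subseteq \conv\bigl(A \setminus \{\boldsymbol{\alpha}\}\bigr) \subsetneq \conv(A)$, since a vertex never lies in the convex hull of the remaining points. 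With that one-line fix, both conclusions follow exactly as you state: evenness of every $\alpha_i$ from comparing sign patterns, and then $f_{\boldsymbol{\alpha}} \geq 0$ together with $f_{\boldsymbol{\alpha}} \neq 0$ gives strict positivity. Also note, as a matter of hygiene rather than necessity, that a vertex of a polytope is automatically an exposed point, so the existence of your strictly separating $\mathbf{w}$ indeed needs no further argument.
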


We define the class of \textit{circuit polynomials} as follows; see also \cite{deWolff:Circuits:OWR,Iliman:deWolff:Circuits}.

\begin{definition}
Let $f \in \R[\mathbf{x}]$ be supported on $A \subset \N^n$ such that all elements of $V(A)$ are even. Then $f$ is called a \struc{\emph{circuit polynomial}} if it of the form
\begin{eqnarray}
 \struc{f(\mathbf{x})} & = & \sum_{j=0}^r f_{\boldsymbol{\alp}(j)} \mathbf{x}^{\boldsymbol{\alp}(j)} + f_{\boldsymbol{\beta}} \mathbf{x}^{\boldsymbol{\beta}}, \label{Equ:CircuitPolynomial}
\end{eqnarray}
with $\struc{r} \leq n$, exponents $\struc{\boldsymbol{\alp}(j)}$, $\struc{\boldsymbol{\beta}} \in A$, and coefficients $\struc{f_{\boldsymbol{\alp}(j)}} \in \R_{> 0}$, $\struc{f_{\boldsymbol{\beta}}} \in \R$, such that the following conditions hold:

\begin{description}
 \item[(C1)] The points $\boldsymbol{\alp}(0), \boldsymbol{\alp}(1),\ldots,\boldsymbol{\alp}(r)$ are affinely independent and equal $V(A)$.
 \item[(C2)] 
 The exponent $\boldsymbol{\beta}$ can be written uniquely as 
 \begin{eqnarray*}
  & & \boldsymbol{\beta} \ = \ \sum_{j=0}^r \lambda_j \boldsymbol{\alp}(j) \ \text{ with } \ \lambda_j \ > \ 0 \ \text{ and } \  \sum_{j=0}^r \lambda_j \ = \ 1
 \end{eqnarray*}
 in \struc{\emph{barycentric coordinates} $\lambda_j$} relative to the vertices $\boldsymbol{\alp}(j)$ with $j=0,\ldots,r$.
\end{description}
We call the terms $f_{\boldsymbol{\alp}(0)} \mathbf{x}^{\boldsymbol{\alp}(0)},\ldots,f_{\boldsymbol{\alp}(r)} \mathbf{x}^{\boldsymbol{\alp}(r)}$ the \struc{\emph{outer terms}} and $f_{\boldsymbol{\beta}} \mathbf{x}^{\boldsymbol{\beta}}$ the \struc{\emph{inner term}} of $f$. We denote the set of all circuit polynomials with support $A$ by $\struc{\Circ_A}$.

For every circuit polynomial we define the corresponding \struc{\textit{circuit number}} as
\begin{eqnarray}
 \struc{\Theta_f} \ = \ \prod_{j = 0}^r \left(\frac{f_{\boldsymbol{\alp}(j)}}{\lambda_j}\right)^{\lambda_j}. \label{Equ:DefCircuitNumber}
\end{eqnarray}
\label{Def:CircuitPolynomial}
\endexa
\end{definition}

Condition (C1) implies that  $V(A) =\{\boldsymbol{\alp}(0),\ldots,\boldsymbol{\alp}(r)\}$ is the vertex set of an $r$-di\-men\-sio\-nal simplex, which coincides with $\New(f)=\conv(A)$. In this case we say that $\New(f)$ is a \struc{\textit{simplex Newton polytope}}. Note that, by \cite[Lemma 3.7]{Iliman:deWolff:Circuits}, we assume w.l.o.g. that $\boldsymbol{\beta} \in \Int(\New(f))$. \\

The terms ``circuit polynomial'' and ``circuit number'' are chosen since $\boldsymbol{\beta}$ and the $\boldsymbol{\alp}(j)$ form a \struc{\textit{circuit}}; this is a minimally affine dependent set; see e.g. \cite{Gelfand:Kapranov:Zelevinsky}.

A fundamental fact is that nonnegativity of a circuit polynomial $f$ can be decided easily via its circuit number $\Theta_f$ alone.

\begin{thm}[\cite{Iliman:deWolff:Circuits}, Theorem 3.8]
 Let $f$  be a  circuit polynomial with inner term $f_{\boldsymbol{\beta}} \mathbf{x}^{\boldsymbol{\beta}}$ and let $\Theta_f$ be the corresponding circuit number, as defined in \eqref{Equ:DefCircuitNumber}. Then the following are equivalent:
\begin{enumerate}
 \item $f$ is nonnegative.
 \item $|f_{\boldsymbol{\beta}}| \leq \Theta_f$ and $\boldsymbol{\beta} \not \in (2\N)^n$ \quad or \quad $f_{\boldsymbol{\beta}} \geq -\Theta_f$ and $\boldsymbol{\beta }\in (2\N)^n$.
\end{enumerate}
\label{Thm:CircuitPolynomialNonnegativity}
\end{thm}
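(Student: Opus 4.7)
The plan is to reduce the statement to the weighted AM--GM inequality applied to the outer terms of $f$, with the barycentric coefficients $\lambda_j$ playing the role of the AM--GM weights. By Proposition~\ref{Prop:NecessaryConditions} and (C1) each vertex $\boldsymbol{\alp}(j)$ lies in $(2\N)^n$, so $\mathbf{x}^{\boldsymbol{\alp}(j)} \geq 0$ for every $\mathbf{x} \in \R^n$. Setting $y_j = (f_{\boldsymbol{\alp}(j)}/\lambda_j)\mathbf{x}^{\boldsymbol{\alp}(j)} \geq 0$ (valid because $f_{\boldsymbol{\alp}(j)}, \lambda_j > 0$), weighted AM--GM gives
\[
\sum_{j=0}^r f_{\boldsymbol{\alp}(j)} \mathbf{x}^{\boldsymbol{\alp}(j)} \;=\; \sum_{j=0}^r \lambda_j y_j \;\geq\; \prod_{j=0}^r y_j^{\lambda_j} \;=\; \Theta_f \prod_{j=0}^r \bigl(\mathbf{x}^{\boldsymbol{\alp}(j)}\bigr)^{\lambda_j} \;=\; \Theta_f\,|\mathbf{x}^{\boldsymbol{\beta}}|,
\]
using $\sum_j \lambda_j \boldsymbol{\alp}(j) = \boldsymbol{\beta}$ and $\mathbf{x}^{\boldsymbol{\alp}(j)} = |\mathbf{x}|^{\boldsymbol{\alp}(j)}$.

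For (2)$\Rightarrow$(1) I split on the parity of $\boldsymbol{\beta}$. If $\boldsymbol{\beta} \in (2\N)^n$, then $\mathbf{x}^{\boldsymbol{\beta}} \geq 0$ and the inequality above yields $f(\mathbf{x}) \geq (\Theta_f + f_{\boldsymbol{\beta}})\mathbf{x}^{\boldsymbol{\beta}}$, which is $\geq 0$ precisely when $f_{\boldsymbol{\beta}} \geq -\Theta_f$. If $\boldsymbol{\beta} \notin (2\N)^n$, then $\mathbf{x}^{\boldsymbol{\beta}}$ can take either sign, and $f(\mathbf{x}) \geq \Theta_f|\mathbf{x}^{\boldsymbol{\beta}}| + f_{\boldsymbol{\beta}} \mathbf{x}^{\boldsymbol{\beta}} \geq (\Theta_f - |f_{\boldsymbol{\beta}}|)|\mathbf{x}^{\boldsymbol{\beta}}|$, which is $\geq 0$ precisely when $|f_{\boldsymbol{\beta}}| \leq \Theta_f$.

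For the converse (1)$\Rightarrow$(2) I must exhibit a point where AM--GM is tight. Equality holds iff all $y_j$ equal a common constant $c > 0$, i.e. $\mathbf{x}^{\boldsymbol{\alp}(j)} = c\lambda_j/f_{\boldsymbol{\alp}(j)}$ for $j=0,\dots,r$. Taking logarithms on the positive orthant, this becomes a linear system in $(\log x_1,\dots,\log x_n, \log c)$ whose coefficient matrix has rows $(\boldsymbol{\alp}(j), -1)$; these rows are linearly independent by affine independence of $\boldsymbol{\alp}(0),\dots,\boldsymbol{\alp}(r)$ from (C1), so the system admits a solution $\mathbf{x}^* \in \R_{>0}^n$ (with a suitable $c > 0$). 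Since each $\boldsymbol{\alp}(j)$ is even, replacing $\mathbf{x}^*$ by any sign-flipped variant leaves the outer monomials unchanged. Hence, if $\boldsymbol{\beta} \in (2\N)^n$ and $f_{\boldsymbol{\beta}} < -\Theta_f$, then directly $f(\mathbf{x}^*) = (\Theta_f + f_{\boldsymbol{\beta}})(\mathbf{x}^*)^{\boldsymbol{\beta}} < 0$; and if $\boldsymbol{\beta} \notin (2\N)^n$ and $|f_{\boldsymbol{\beta}}| > \Theta_f$, then I flip the sign of some $x_i^*$ with $\beta_i$ odd to arrange $\sign((\mathbf{x}^*)^{\boldsymbol{\beta}}) = -\sign(f_{\boldsymbol{\beta}})$, yielding $f(\mathbf{x}^*) = (\Theta_f - |f_{\boldsymbol{\beta}}|)\,|(\mathbf{x}^*)^{\boldsymbol{\beta}}| < 0$. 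In both cases $f$ fails to be nonnegative, proving the converse.

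The main obstacle is the necessity direction: the existence of the AM--GM-tight witness relies on linear independence of the extended vectors $(\boldsymbol{\alp}(j), -1)$, which is exactly the affine independence hypothesis in (C1); without it, the circuit number would not sharply characterize nonnegativity. A minor technical point is ensuring the coordinates of $\mathbf{x}^*$ are nonzero when some $\boldsymbol{\alp}(j)$ has zero entries, but this is automatic from the log-parametrization keeping $\mathbf{x}^*$ in the open positive orthant.
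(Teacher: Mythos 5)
Your proof is correct. The paper does not prove this statement itself (it is quoted from \cite{Iliman:deWolff:Circuits}), and your argument --- weighted AM--GM on the outer terms with weights $\lambda_j$ for sufficiency, plus an equality point obtained from the solvable log-linear system with independent rows $(\boldsymbol{\alpha}(j),-1)$ and a sign flip on an odd coordinate of $\boldsymbol{\beta}$ for necessity --- is essentially the standard proof from that reference, so nothing needs to be added.
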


Note that (2) can be stated equivalently as: $|f_{\boldsymbol{\beta}}| \leq \Theta_f$ or $f$ is a sum of monomial squares. Writing a polynomial as a sum of nonnegative circuit polynomials is a certificate of nonnegativity. We denote
by \struc{SONC} both the class of polynomials that are \struc{\it sums of nonnegative circuit polynomials} and the property of a polynomial to be in this class.  In what follows let $\struc{P_{n,2d}}$ be the cone of nonnegative polynomials of degree $2d$ and let $\struc{\Sig_{n,2d}}$ denote the cone of $n$-variate sums of squares of degree $2d$.

\begin{definition}
 We define for every $n,d \in \N^*$ the set of \struc{\emph{sums of nonnegative circuit polynomials} (SONC)} in $n$ variables of degree $2d$ as
$$\struc{C_{n,2d}} \ = \ \left\{f \in \R[\mathbf{x}] \ :\  f = \sum_{i=1}^k \mu_i p_i, \mu_i \geq 0, p_i \in \Circ_A \cap P_{n,2d}, A \subseteq \cL_{n,2d}, k \in \N^*\right\}.$$
\label{Def:SONC}
\endexa
\end{definition}
Indeed, SONC polynomials form a convex cone independent of the SOS cone.

\begin{thm}[\cite{Iliman:deWolff:Circuits}, Proposition 7.2]
$C_{n,2d}$ is a convex cone satisfying:
\begin{enumerate}
 \item $C_{n,2d} \subseteq P_{n,2d}$ for all $n,d \in \N^*$,
\item $C_{n,2d} \subseteq \Sigma_{n,2d}$ if and only if $(n,2d)\in\{(1,2d),(n,2),(2,4)\}$,
\item  $\Sigma_{n,2d} \not\subseteq C_{n,2d}$ for all $(n,2d)$ with $2d \geq 6$.
\end{enumerate}
\label{Thm:ConeContainment}
\end{thm}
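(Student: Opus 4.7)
\smallskip

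\noindent\textbf{Proof plan.} Part (1) is essentially definitional. Each summand $\mu_i p_i$ in a SONC decomposition is, by Theorem~\ref{Thm:CircuitPolynomialNonnegativity} together with $\mu_i \geq 0$, a nonnegative polynomial whose support lies in $\cL_{n,2d}$; hence any $f \in C_{n,2d}$ belongs to $P_{n,2d}$. Convexity of $C_{n,2d}$ is immediate since concatenating two representations produces a new representation of the same form, and multiplying by a nonnegative scalar just rescales the coefficients $\mu_i$.

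For part (2), the \emph{if} direction follows from Hilbert's 1888 classification: in the three listed cases one has $P_{n,2d} = \Sigma_{n,2d}$, and combining this with part (1) yields $C_{n,2d} \subseteq \Sigma_{n,2d}$. For the \emph{only if} direction the plan is to exhibit, in every remaining pair $(n,2d)$, a nonnegative circuit polynomial that fails to be a sum of squares. The germinal example is the Motzkin polynomial $1 + x_1^4 x_2^2 + x_1^2 x_2^4 - 3 x_1^2 x_2^2$, which is a circuit polynomial by Definition~\ref{Def:CircuitPolynomial} and is classically known to be nonnegative but not SOS. One then lifts this example to arbitrary $(n,2d)$ outside the Hilbert triple either by adjoining monomial squares in further variables, or by scaling the support into $\cL_{n,2d}$; the nonnegativity is preserved because the circuit number in \eqref{Equ:DefCircuitNumber} is unchanged by such modifications, while the non-SOS property transfers by standard arguments using the Newton polytope of an SOS representation.

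For part (3) the strategy is dual: for every $(n,2d)$ with $2d \geq 6$ we construct a sum of squares that is not SONC. The key structural observation is that every SONC summand must have a simplex Newton polytope with even vertices and a nonnegative principal coefficient at each vertex, and its interior coefficients are tightly constrained by the circuit number inequality in Theorem~\ref{Thm:CircuitPolynomialNonnegativity}. Starting from a square $h^2$ whose support contains interior exponents with coefficients exceeding any feasible circuit-number bound relative to the available vertex terms, a careful inspection rules out every potential SONC decomposition. Explicit candidates come from squares of sparse polynomials chosen so that the negative (or barely positive) cross-term coefficients are too large to be dominated by the vertex coefficients along every simplicial cover of the support.

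The routine steps are (1), the ``if'' direction of (2), and the Motzkin-style lift. The main obstacle is (3): proving that \emph{no} decomposition into circuit polynomials, over \emph{any} triangulation of the support, satisfies all circuit-number inequalities simultaneously. This requires a case distinction in $(n,2d)$ and an extremality argument bounding interior coefficients against outer ones along every admissible simplex containing the offending interior exponent.
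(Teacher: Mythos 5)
This theorem is not proved in the paper at all: it is imported verbatim from \cite{Iliman:deWolff:Circuits} (Proposition 7.2), so your proposal has to be measured against that external proof. Your part (1) and the ``if'' half of (2) are correct and routine (Hilbert's classification combined with part (1)). The ``only if'' half of (2) has a concrete gap. The Motzkin polynomial already lies in $C_{n,2d}$ for every $n\geq 2$, $2d\geq 6$ and is not SOS, so no lifting is even needed for those pairs; but the remaining non-Hilbert cases are $(n,4)$ with $n\geq 3$, and these are \emph{not} reachable from Motzkin: adjoining variables or multiplying by monomial squares keeps the degree at least $6$, and ``scaling the support into $\cL_{n,2d}$'' would produce non-even, non-integral exponents. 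You need a separate degree-$4$ witness, e.g.\ the dehomogenized Choi--Lam polynomial $1+x_1^2x_2^2+x_2^2x_3^2+x_1^2x_3^2-4x_1x_2x_3$: its inner exponent $(1,1,1)$ has barycentric coordinates $\tfrac14,\tfrac14,\tfrac14,\tfrac14$ with respect to the four even vertices, so the circuit number is $4$ and Theorem \ref{Thm:CircuitPolynomialNonnegativity} gives nonnegativity, while it is classically not SOS; this is exactly the additional example the cited proof relies on.

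Part (3) is the genuine gap: you name the goal but supply neither a concrete SOS candidate nor the argument excluding \emph{all} SONC decompositions, and you flag this yourself as the main obstacle, so as written this part is an intention, not a proof. Your guiding heuristic (interior coefficients too large for every circuit-number bound) is moreover doubtful in the univariate case, which (3) also covers: outer coefficients of candidate circuits can be redistributed freely, and natural candidates collapse, e.g.\ $x^6-x^4-x^2+1=(x^6-2x^4+x^2)+(x^4-2x^2+1)$ is SONC although both middle coefficients are negative. The standard, cleaner obstruction runs through zero sets, in the spirit of Lemma \ref{Lem:SONCNotClosedUnderMultiplication}: in a SONC decomposition every summand must vanish on the entire zero set of the target, a circuit polynomial with $|f_{\boldsymbol{\beta}}|<\Theta_f$ is strictly positive on $(\R^*)^n$, and a circuit polynomial with $|f_{\boldsymbol{\beta}}|=\Theta_f$ vanishes in the positive orthant only on the equality locus of a single weighted AM/GM inequality, a connected exponential-affine set. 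Hence a square such as $\bigl((x_1-1)(x_1-2)(x_1-3)\bigr)^2$, which is SOS of degree $6$ but vanishes at three points of $\R^*$ (respectively on three parallel hyperplanes for $n\geq 2$), admits no SONC decomposition; this works for all $(n,2d)$ with $2d\geq 6$. If you insist on your coefficient-domination route instead, you must also address cancellation: summands may carry outer terms at exponents absent from the target and inner terms of positive sign, so restricting a priori to triangulations of the support of $h^2$ is not justified.
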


For further details about the SONC cone see \cite{deWolff:Circuits:OWR,Iliman:deWolff:Circuits}.

\subsection{Relative Entropy and the SAGE Cone}

There exists an important concept related to the SONC cone, which was introduced by Chandrasekaran and Shah in \cite{Chandrasekaran:Shah:SAGE}, namely the \struc{\textit{cone of sums of arithmetic geometric exponentials (SAGE)}}. In what follows, we introduce relative entropy programs and the SAGE cone. Later, in Section \ref{Sec:SAGEandSONC}, we discuss its relationship to SONC polynomials and how we can use relative entropy programming for our results.\\

We denote by $\struc{\langle \cdot,\cdot \rangle}$ the standard inner product. Following \cite{Chandrasekaran:Shah:SAGE}, a \struc{\textit{signomial}} is a sum of exponentials
\[
 f(\mathbf{x}) \ = \ \sum_{j=0}^l f_{\boldsymbol{\alp}(j)}e^{\langle \boldsymbol{\alp}(j),\mathbf{x}\rangle}
\]
with $f_{\boldsymbol{\alp}(j)} \in \R, \mathbf{x}\in\R^n$ and real vectors $\boldsymbol{\alp}(0),\ldots,\boldsymbol{\alp}(l)\in\R^n$.
A signomial with at most one negative coefficient is called an \struc{\textit{AM/GM-exponential}}. Thus, an AM/GM-exponential has the following form
\[
 f(\mathbf{x}) \ = \ \sum_{j=0}^l f_{\boldsymbol{\alp}(j)} e^{\langle \boldsymbol{\alp}(j),\mathbf{x}\rangle} + f_{\boldsymbol{\beta}} \cdot e^{\langle \boldsymbol{\beta},\mathbf{x}\rangle},
\]
where $f_{\boldsymbol{\beta}} \in \R, f_{\boldsymbol{\alp}(j)} \in \R_{> 0}$ and $\boldsymbol{\beta}, \boldsymbol{\alp}(j) \in \R^n$ for $j=0,\ldots,l$. Note that $l > n$ is possible.

As shown in \cite{Chandrasekaran:Shah:SAGE}, testing whether an AM/GM-exponential is nonnegative is possible via the \struc{\textit{relative entropy function}}. This function is defined as follows for $\struc{\boldsymbol{\nu}} = (\nu_0,\ldots,\nu_l)$ and $\struc{\boldsymbol{\zeta}} = (\zeta_0,\ldots,\zeta_l)$ in the nonnegative orthant $\R^{l+1}_{\geq 0}$:
\[
\struc{D(\boldsymbol{\nu, \zeta})} \ = \ \sum_{j=0}^l \nu_j \log\left(\frac{\nu_j}{\zeta_j}\right).
\]

By convention, we define $0\log\frac{0}{\zeta_j} = 0$ for any $\zeta_j \in \R_{\geq 0}$ and $\nu_j\log\frac{\nu_j}{0} = 0$ if $\nu_j = 0$ and $\nu_j\log\frac{\nu_j}{0} = \infty$ if $\nu_j > 0$. Let furthermore $\struc{\boldsymbol{f_{\alp}}} = (f_{\boldsymbol{\alp}(0)},\ldots,f_{\boldsymbol{\alp}(l)}) \in \R_{> 0}^{l+1}$. Then the following lemma holds.

\begin{lemma}[\cite{Chandrasekaran:Shah:SAGE}, Lemma 2.2]
Let $f(\mathbf{x})$ be an AM/GM-exponential. Then $f(\mathbf{x})$ is nonnegative for all $\mathbf{x} \in \R^n$ if and only if there exists a $\boldsymbol{\nu} \in \R^{l+1}_{\geq 0}$ satisfying the conditions
\begin{equation}
D(\boldsymbol{\nu}, e\boldsymbol{f_{\alp}}) - f_{\boldsymbol{\beta}} \leq 0 \;,\; \boldsymbol{Q \nu} = \langle\boldsymbol{1},\boldsymbol{\nu}\rangle\boldsymbol{\beta} \text{ with } \boldsymbol{Q}=(\boldsymbol{\alp}(0) \cdots \boldsymbol{\alp}(l)) \in \R^{n\times (l+1)}.
\label{Equ:SignomialNonnegCert}
\end{equation}
\label{Lem:SignomialNonnegCert}
\end{lemma}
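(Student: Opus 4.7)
The approach is to exploit the tight link between the relative entropy function and the weighted arithmetic-geometric-mean inequality. I would first split on the sign of $f_{\boldsymbol{\beta}}$: if $f_{\boldsymbol{\beta}} \geq 0$, nonnegativity of $f$ is immediate, and $\boldsymbol{\nu} = \boldsymbol{0}$ trivially satisfies \eqref{Equ:SignomialNonnegCert} (using the conventions $0\log 0 = 0$ and $\boldsymbol{Q}\boldsymbol{0} = \boldsymbol{0} = 0\cdot\boldsymbol{\beta}$). The substantive case is $f_{\boldsymbol{\beta}} < 0$, where dividing by the positive factor $e^{\langle \boldsymbol{\beta}, \mathbf{x}\rangle}$ reduces nonnegativity of $f$ to the geometric-programming statement
\[
 \inf_{\mathbf{x} \in \R^n} \sum_{j=0}^{l} f_{\boldsymbol{\alp}(j)}\, e^{\langle \boldsymbol{\alp}(j) - \boldsymbol{\beta},\, \mathbf{x}\rangle} \ \geq \ -f_{\boldsymbol{\beta}}.
\]

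For the direction $(\Leftarrow)$, given a feasible $\boldsymbol{\nu} \neq \boldsymbol{0}$, I would set $c := \langle \boldsymbol{1}, \boldsymbol{\nu}\rangle$ and $\lambda_j := \nu_j/c$, so that the linear constraint becomes $\sum_j \lambda_j \boldsymbol{\alp}(j) = \boldsymbol{\beta}$ with $\sum \lambda_j = 1$. A short algebraic computation shows that, with $\Theta := \prod_j (f_{\boldsymbol{\alp}(j)}/\lambda_j)^{\lambda_j}$, one has $D(\boldsymbol{\nu}, e\boldsymbol{f_{\alp}}) = c\log\bigl(c/(e\Theta)\bigr)$. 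As a function of $c$ this attains its minimum $-\Theta$ at $c=\Theta$, so the hypothesis $D \leq f_{\boldsymbol{\beta}}$ forces $\Theta \geq -f_{\boldsymbol{\beta}}$. Weighted AM-GM applied to the positive terms $(f_{\boldsymbol{\alp}(j)}/\lambda_j)\, e^{\langle \boldsymbol{\alp}(j), \mathbf{x}\rangle}$ then yields $\sum_j f_{\boldsymbol{\alp}(j)} e^{\langle \boldsymbol{\alp}(j), \mathbf{x}\rangle} \geq \Theta\, e^{\langle \boldsymbol{\beta}, \mathbf{x}\rangle}$, so that $f(\mathbf{x}) \geq (\Theta + f_{\boldsymbol{\beta}})\, e^{\langle \boldsymbol{\beta}, \mathbf{x}\rangle} \geq 0$.

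For $(\Rightarrow)$, I would invoke Lagrangian duality. Introduce slack variables $y_j := \langle \boldsymbol{\alp}(j) - \boldsymbol{\beta},\, \mathbf{x}\rangle$ and form the Lagrangian $L(\mathbf{x}, \mathbf{y}, \boldsymbol{\nu}) = \sum_j f_{\boldsymbol{\alp}(j)} e^{y_j} + \sum_j \nu_j(\langle \boldsymbol{\alp}(j)-\boldsymbol{\beta},\mathbf{x}\rangle - y_j)$. Minimizing over $\mathbf{x}$ produces the feasibility condition $\sum_j \nu_j(\boldsymbol{\alp}(j) - \boldsymbol{\beta}) = \boldsymbol{0}$, i.e.\ $\boldsymbol{Q}\boldsymbol{\nu} = \langle \boldsymbol{1},\boldsymbol{\nu}\rangle \boldsymbol{\beta}$; minimizing over $y_j$ via the Fenchel conjugate of the exponential (and collecting the additive constants $\nu_j \log(ef_{\boldsymbol{\alp}(j)})$) identifies the dual objective as $-D(\boldsymbol{\nu}, e\boldsymbol{f_{\alp}})$. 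The assumption $f \geq 0$ makes the primal value $\geq -f_{\boldsymbol{\beta}}$, so strong duality supplies a dual-feasible $\boldsymbol{\nu}$ with $-D(\boldsymbol{\nu}, e\boldsymbol{f_{\alp}}) \geq -f_{\boldsymbol{\beta}}$, which is exactly \eqref{Equ:SignomialNonnegCert}.

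The principal obstacle is certifying strong duality rigorously. One must verify that the primal value is finite and attained, which is equivalent to $\boldsymbol{\beta} \in \conv\{\boldsymbol{\alp}(0), \ldots, \boldsymbol{\alp}(l)\}$; this inclusion is itself forced by $f \geq 0$ together with $f_{\boldsymbol{\beta}} < 0$ via a recession-direction argument (otherwise a separating hyperplane produces a ray along which $f \to f_{\boldsymbol{\beta}} < 0$). One must also handle the boundary behavior at $\nu_j = 0$ through the standard continuity conventions on $D(\cdot,\cdot)$, and check a Slater-type condition for the dual. The remaining computations linking the dual optimum to the circuit-number expression $\Theta$ are routine but require care with the normalization $e\boldsymbol{f_{\alp}}$.
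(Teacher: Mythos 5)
This lemma is not proved in the paper at all: it is quoted verbatim from Chandrasekaran and Shah (their Lemma~2.2), so there is no internal proof to compare against; your argument essentially reconstructs the original duality proof. Both halves of your proposal are sound: the reduction to $\inf_{\mathbf{x}}\sum_j f_{\boldsymbol{\alp}(j)}e^{\langle\boldsymbol{\alp}(j)-\boldsymbol{\beta},\mathbf{x}\rangle}\geq -f_{\boldsymbol{\beta}}$ (with $f_{\boldsymbol{\beta}}\geq 0$ handled by $\boldsymbol{\nu}=\boldsymbol{0}$), the sufficiency direction via normalizing $\boldsymbol{\nu}$ to barycentric weights, the identity $D(\boldsymbol{\nu},e\boldsymbol{f_{\alp}})=c\log\bigl(c/(e\Theta)\bigr)\geq-\Theta$, and weighted AM--GM; and the necessity direction via the Lagrange/Fenchel dual, whose objective is $-D(\boldsymbol{\nu},e\boldsymbol{f_{\alp}})$ over the feasible set in \eqref{Equ:SignomialNonnegCert}. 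One clarification on what you call the principal obstacle: you do not need primal attainment, hence you do not need the inclusion $\boldsymbol{\beta}\in\conv\{\boldsymbol{\alp}(0),\ldots,\boldsymbol{\alp}(l)\}$ (your recession argument is correct but superfluous). The primal value is automatically finite, since the objective is nonnegative and the problem is feasible, and because the only constraints are affine while the objective has full domain, the refined Slater condition (equivalently Fenchel's duality theorem, the relative interiors of the domains intersecting trivially) already yields strong duality \emph{together with attainment of the dual optimum}, which is exactly what the argument uses; note also that the Slater-type hypothesis is to be checked on the primal in order to obtain dual attainment, not on the dual as your last sentence suggests.
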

Checking whether such a $\boldsymbol{\nu} \in \R^{l+1}_{\geq 0}$ exists is a convex optimization problem by means of the joint convexity of the relative entropy function $D(\boldsymbol{\nu, \zeta})$. More specifically, the corresponding problem is a \textit{relative entropy program}; see \cite{Chandrasekaran:Shah:RelativeEntropyApplications}.

\begin{definition}
Let $\boldsymbol{\nu, \zeta} \in \R_{\geq 0}^{l+1}$ and $\boldsymbol{\delta} \in \R^{l+1}$. A \struc{\textit{relative entropy program (REP)}} is of the form:
\begin{eqnarray}
\label{Equ:REP}
\begin{cases} 
\text{minimize} & p_0(\boldsymbol{\nu},\boldsymbol{\zeta},\boldsymbol{\delta}), \\ 
\text{subject to:} &  
\begin{array}{cl}
 (1) & p_i(\boldsymbol{\nu},\boldsymbol{\zeta},\boldsymbol{\delta}) \leq 1 \ \text{ for all } \ i = 1,\ldots,m, \\
 (2) & \mathbf{\nu}_j \log\left(\frac{\mathbf{\nu}_j}{\mathbf{\zeta}_j}\right) \leq \mathbf{\delta}_j \ \text{ for all } \ j = 0,\ldots,l, \\
\end{array}
\end{cases}
\end{eqnarray}
where $p_0,\dots,p_m$ are linear functionals and the constraints (2) are jointly convex functions in $\boldsymbol{\nu},\boldsymbol{\zeta}$, and $\boldsymbol{\delta}$ defining the relative entropy cone.\\
\label{Def:RelativeEntropyProgram}
\endexa
\end{definition}

Relative entropy programs are convex and can be solved efficiently via interior-point methods \cite{Nesterov:Nemirovskii}. 
Geometric programs, a prominent class of convex optimization programs \cite{Boyd:GP,Boyd:CO,Duffin:Peterson:Zener:Book}, comprise a subclass of relative entropy programs; see \cite{Chandrasekaran:Shah:RelativeEntropyApplications} for further information.\\ 

If a signomial consists of more than one negative term, then a natural and sufficient condition for certifying nonnegativity is to express the signomial as a sum of nonnegative AM/GM-exponentials. For a finite set of exponents $M \subset \R^n$, one denotes by
\[
\struc{SAGE(M)} \ = \ \left\{f = \sum_{i=1}^m f_i \ : \
\begin{array}{l}
\text{every } f_i \text{ is a nonnegative AM/GM-exponential} \\
\text{with exponents in } M \\
\end{array}
\right\}
\]
the set of \struc{\textit{sums of nonnegative AM/GM-exponentials (SAGE)}} with respect to $M$; see \cite{Chandrasekaran:Shah:SAGE}.

\subsection{Signomials and Polynomials}
 
The connection between signomials and polynomials is given by the bijective componentwise exponential function 
\[
\struc{\exp}: \R^n \rightarrow \R^n_{>0}, \quad (x_1,\ldots,x_n) \mapsto (e^{x_1},\ldots,e^{x_n}). 
\]
Via this mapping a signomial
\[
 f(\mathbf{x}) \ = \ \sum_{j=0}^l f_{\boldsymbol{\alp}(j)}e^{\langle \boldsymbol{\alp}(j),\mathbf{x}\rangle}
\]
is transformed into 
\[
 f(\mathbf{x}) \ = \ \sum_{j=0}^l f_{\boldsymbol{\alp}(j)}\mathbf{x}^{\boldsymbol{\alp}(j)},
\]
which is a polynomial if $\boldsymbol{\alp(0)},\ldots,\boldsymbol{\alp(l)} \in \N^n$. Hence, checking nonnegativity of such signomials corresponds to checking nonnegativity of \textit{a polynomial on the positive orthant}. Note that is sufficient to consider the positive orthant to certify nonnegativity, since the positive orthant is dense in the nonnegative orthant. We call such a polynomial $f(\mathbf{x})=\sum_{j=0}^l f_{\boldsymbol{\alp}(j)}\mathbf{x}^{\boldsymbol{\alp}(j)}$ a \struc{\textit{SAGE polynomial}}, and we call it an \struc{\textit{AM/GM-polynomial}} if it has at most one negative coefficient.

\section{A Comparison of SAGE and SONC}
\label{Sec:SAGEandSONC}

The concept of SAGE polynomials explicitly addresses the question of nonnegativity of polynomials on $\R_{> 0}^n$. However, the second and third author showed already before the development of the SAGE class that for circuit polynomials global nonnegativity coincides with nonnegativity on $\R_{> 0}^n$ assuming that its inner term is negative; see \cite[particularly Section 3.1]{Iliman:deWolff:Circuits}. This fact was, next to the circuit number, the key motivation to consider the class of circuit polynomials. Hence, in what follows we can use results from the analysis of the SAGE cone applied to circuit polynomials as a certificate for \textit{global} nonnegativity rather than just nonnegativity on $\R_{> 0}^n$.\\

Let $f(\mathbf{x})=\sum_{j=0}^{r} f_{\boldsymbol{\alp}(j)}\mathbf{x}^{\boldsymbol{\alp}(j)} + f_{\boldsymbol{\beta}} \mathbf{x}^{\boldsymbol{\beta}}$ be a circuit polynomial which is not a sum of monomial squares. We can assume without loss of generality that $f_{\boldsymbol{\beta}} < 0$ after a possible transformation of variables $x_j \mapsto -x_j$. In this case, we have
\begin{eqnarray}
f(\mathbf{x}) \geq 0  \; \text{ for all }\; \mathbf{x}\in \R^n \Longleftrightarrow  f(\mathbf{x}) \geq 0  \; \text{ for all }\; \mathbf{x}\in \R^n_{> 0} ; \label{Equ:CircuitPolynomialsOrthantEquivalence}
\end{eqnarray}
see \cite[Section 3.1]{Iliman:deWolff:Circuits}. Using this fact, we can explicitly characterize the corresponding AM/GM-exponential coming from a circuit polynomial under the exp-map. We call this a \struc{\textit{simplicial AM/GM-exponential}}.

\begin{prop}
Let $f$ be a nonnegative simplicial AM/GM-exponential with interior point $\boldsymbol{\beta}$. Then \eqref{Equ:SignomialNonnegCert} is always satisfied for the probability measure $\nu_j = \lambda_j$ for $j = 0,\ldots,r$ where $\lambda_j$ is the $j$-th coefficient in the convex combination of the interior point $\boldsymbol{\beta} \in \N^n$ with respect to the vertices $\boldsymbol{\alp}(0),\ldots,\boldsymbol{\alp}(r) \in (2\N)^n$.
\end{prop}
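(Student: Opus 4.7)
The plan is to verify both conditions of Lemma~\ref{Lem:SignomialNonnegCert} directly for the choice $\nu_j = \lambda_j$. The affine equality $\boldsymbol{Q}\boldsymbol{\nu} = \langle \mathbf{1},\boldsymbol{\nu}\rangle\boldsymbol{\beta}$ is essentially free: property~(C2) of Definition~\ref{Def:CircuitPolynomial} says $\sum_j \lambda_j = 1$ and $\sum_j \lambda_j \boldsymbol{\alp}(j) = \boldsymbol{\beta}$, so $\langle\mathbf{1},\boldsymbol{\nu}\rangle = 1$ and $\boldsymbol{Q}\boldsymbol{\nu} = \boldsymbol{\beta} = \langle\mathbf{1},\boldsymbol{\nu}\rangle\boldsymbol{\beta}$. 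In particular, up to positive scaling this is the \emph{only} distribution in the probability simplex that satisfies the linear constraint.

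For the relative entropy inequality $D(\boldsymbol{\nu}, e\boldsymbol{f_{\alp}}) - f_{\boldsymbol{\beta}} \leq 0$, I would substitute $\nu_j = \lambda_j$ and simplify using $\log e = 1$ and $\sum_j \lambda_j = 1$, so as to recognize the circuit number from~\eqref{Equ:DefCircuitNumber}:
\[
D(\boldsymbol{\lambda}, e\boldsymbol{f_{\alp}})
\;=\; \sum_{j=0}^r \lambda_j \log\!\left(\frac{\lambda_j}{e\,f_{\boldsymbol{\alp}(j)}}\right)
\;=\; -\sum_{j=0}^r \lambda_j \log\!\left(\frac{f_{\boldsymbol{\alp}(j)}}{\lambda_j}\right) - 1
\;=\; -\log \Theta_f - 1.
\]
The inequality then collapses to a one-variable comparison between $f_{\boldsymbol{\beta}}$ and $\Theta_f$. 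I would close this step by invoking the exponential analog of Theorem~\ref{Thm:CircuitPolynomialNonnegativity}: since $f$ is globally nonnegative and simplicial, the equivalence~\eqref{Equ:CircuitPolynomialsOrthantEquivalence} together with the weighted AM-GM inequality that underlies the definition of $\Theta_f$ forces the bound $|f_{\boldsymbol{\beta}}| \leq \Theta_f$, i.e.\ $f_{\boldsymbol{\beta}} \geq -\Theta_f$.

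The step I expect to be delicate is reconciling the entropy bound $-\log\Theta_f - 1 \leq f_{\boldsymbol{\beta}}$ produced by the direct substitution with the sharper AM-GM bound $-\Theta_f \leq f_{\boldsymbol{\beta}}$ guaranteed by nonnegativity. The remedy exploits the positive homogeneity of the relative entropy cone in $(\boldsymbol{\nu},\boldsymbol{\zeta})$: replacing $\boldsymbol{\nu} = \boldsymbol{\lambda}$ by $\boldsymbol{\nu} = \mu\boldsymbol{\lambda}$ for $\mu > 0$ preserves the affine equality (now with $\langle\mathbf{1},\boldsymbol{\nu}\rangle = \mu$) and turns the entropy condition into $\mu\log\mu - \mu\log\Theta_f - \mu \leq f_{\boldsymbol{\beta}}$; minimizing the left-hand side in $\mu$ yields precisely the tight bound $-\Theta_f \leq f_{\boldsymbol{\beta}}$ at $\mu = \Theta_f$. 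Accordingly, the phrase ``the probability measure $\nu_j = \lambda_j$'' is naturally read as specifying the normalized direction $\boldsymbol{\nu}/\langle\mathbf{1},\boldsymbol{\nu}\rangle$ of the certifying~$\boldsymbol{\nu}$: this direction is pinned down by the uniqueness of the barycentric representation in~(C2), while the scaling is determined by the AM-GM saturation built into the circuit number.
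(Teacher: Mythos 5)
Your argument is correct and uses the same ingredients as the paper's own (very brief) proof: the reduction to circuit polynomials via \eqref{Equ:CircuitPolynomialsOrthantEquivalence}, the circuit-number characterization of nonnegativity in Theorem \ref{Thm:CircuitPolynomialNonnegativity}, and the fact that the barycentric coordinates $\lambda_j$ give the essentially unique direction satisfying the linear constraint. The difference is that you actually carry out the entropy computation, and in doing so you expose a point the paper's two-line proof glosses over: with $\nu_j=\lambda_j$ one gets $D(\boldsymbol{\lambda}, e\boldsymbol{f_{\alp}})=-\log\Theta_f-1$, and since $\log x\le x-1$ gives $-\Theta_f\le-\log\Theta_f-1$, the inequality $D(\boldsymbol{\lambda}, e\boldsymbol{f_{\alp}})\le f_{\boldsymbol{\beta}}$ can genuinely fail for a nonnegative $f$ near the boundary (e.g. $f_{\boldsymbol{\beta}}=-\Theta_f$ with $\Theta_f\neq 1$). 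The unscaled probability measure satisfies the alternative parametrization \eqref{Equ:SignomialNonnegCertalternative} on the nose, whereas for \eqref{Equ:SignomialNonnegCert} one must rescale to $\boldsymbol{\nu}=\Theta_f\boldsymbol{\lambda}$, which is exactly what your homogeneity and minimization-in-$\mu$ step accomplishes; this is also consistent with the paper's subsequent remark that every feasible $\boldsymbol{\nu}$ is a scalar multiple of $(\lambda_0,\ldots,\lambda_r)$. So your proposal is not only correct but slightly sharper than the printed proof: the proposition should be read, as you do, as asserting feasibility of \eqref{Equ:SignomialNonnegCert} in the direction $\boldsymbol{\lambda}$ (equivalently, as the literal statement for \eqref{Equ:SignomialNonnegCertalternative}), with the certifying scale pinned down by $\mu=\Theta_f$.
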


\begin{proof}
By \eqref{Equ:CircuitPolynomialsOrthantEquivalence} it is sufficient to investigate circuit polynomials. The proof follows from Theorem \ref{Thm:CircuitPolynomialNonnegativity} where nonnegativity of circuit polynomials is explicitly characterized via the circuit number and hence by the convex combination of the interior point $\boldsymbol{\beta}$ in terms of the vertices $\boldsymbol{\alp}(0),\ldots,\boldsymbol{\alp}(r)$. The coefficients $\lam_0,\ldots,\lam_r$ in the convex combination form a probability measure by definition. 
\end{proof}

The circuit number is defined via barycentric coordinates; see Section \ref{SubSec:PrelimSONC}.  This parame\-trization for nonnegativity corresponds to the geometric programming literature; see \cite[(2.2), Page 1151]{Chandrasekaran:Shah:SAGE} and also \cite{Duffin:Peterson:Zener:Book}:
\begin{equation}
D(\boldsymbol{\nu}, \boldsymbol{f_{\alp}}) + \log(-f_{\boldsymbol{\beta}}) \leq 0 ,\boldsymbol{\nu}\in\R_{\geq 0}^{l+1}\;,\boldsymbol{Q \nu} = \boldsymbol{\beta}, \; \langle\boldsymbol{1},\boldsymbol{\nu}\rangle = 1.
\label{Equ:SignomialNonnegCertalternative}
\end{equation}
Note that we assume $f_{\boldsymbol{\beta}} < 0$ here. Chandrasekaran and Shah showed that the conditions \eqref{Equ:SignomialNonnegCert} and \eqref{Equ:SignomialNonnegCertalternative} are equivalent (this is non-obvious); see \cite{Chandrasekaran:Shah:SAGE}. However, they also point out in \cite{Chandrasekaran:Shah:SAGE} that restricting $\boldsymbol{\nu}$ to a probability measure as in \eqref{Equ:SignomialNonnegCertalternative} comes with the drawback that the parametrization in \eqref{Equ:SignomialNonnegCertalternative} is not \textit{jointly convex} in $\boldsymbol{\nu},\boldsymbol{f_{\alp}}$, and $f_{\boldsymbol{\beta}}$. This is in sharp contrast to the parametrization \eqref{Equ:SignomialNonnegCert}, which \textit{is} jointly convex in $\boldsymbol{\nu},\boldsymbol{f_{\alp}}$, and $f_{\boldsymbol{\beta}}$ and yields a convex relative entropy program, which can be solved efficiently. Thus, the chosen parametrization has a significant impact from the perspective of optimization.

However, while this fact is a serious problem for \textit{arbitrary} AM/GM-exponentials, it turns out that this problem is much simpler for \textit{circuit polynomials} and the corresponding \textit{simplicial} AM/GM-exponentials as we show in what follows.

For a simplicial AM/GM-exponential we have that $l = r$ in \eqref{Equ:SignomialNonnegCert}. Moreover, since the support is a circuit, $\boldsymbol{Q}$ is a full-rank matrix. Thus, $\boldsymbol{\nu}$ is unique up to a scalar multiple. By the definition of circuit polynomials, Definition \ref{Def:CircuitPolynomial}, we know that the barycentric coordinates $(\lam_0,\ldots,\lam_r)$ of $\boldsymbol{\beta}$ with respect to the vertices $\boldsymbol{\alp}(0),\ldots,\boldsymbol{\alp}(r)$ of $\New(f)$ are the unique solution of \eqref{Equ:SignomialNonnegCertalternative}. It follows that the barycentric coordinates $(\lam_0,\ldots,\lam_r)$ are also a solution of \eqref{Equ:SignomialNonnegCert}. Hence, we obtain for every solution $\boldsymbol{\nu}$ that $\boldsymbol{\nu} = d \cdot (\lam_0,\ldots,\lam_r)$ for some $d \in \R^*$. We can now conclude the following theorem.

\begin{thm}
Let $f(\mathbf{x})=\sum_{j=0}^{r} f_{\boldsymbol{\alp}(j)}\mathbf{x}^{\boldsymbol{\alp}(j)} + f_{\boldsymbol{\beta}} \mathbf{x}^{\boldsymbol{\beta}}$ be a circuit polynomial, which is not a sum of monomial squares. Then $f(\mathbf{x})$  is nonnegative on $\R^n$ if and only if a particular relative entropy program is feasible, which is jointly convex in $\boldsymbol{\nu}$, the $f_{\boldsymbol{\alp}(j)}$, $|f_{\boldsymbol{\beta}}|$, and an additional vector $\boldsymbol{\delta} \in \R^{r+1}$.
\label{Thm:CircuitRelativeEntropy}
\end{thm}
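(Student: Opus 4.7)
The strategy is to combine the equivalence \eqref{Equ:CircuitPolynomialsOrthantEquivalence} with the Chandrasekaran--Shah characterization in Lemma \ref{Lem:SignomialNonnegCert}, and then repackage the resulting feasibility problem into the standard form of Definition \ref{Def:RelativeEntropyProgram}. Since $f$ is a circuit polynomial that is not a sum of monomial squares, after the sign change $x_j \mapsto -x_j$ described just before \eqref{Equ:CircuitPolynomialsOrthantEquivalence} we may assume $f_{\boldsymbol{\beta}} < 0$. By \eqref{Equ:CircuitPolynomialsOrthantEquivalence}, nonnegativity of $f$ on $\R^n$ is then equivalent to nonnegativity on $\R^n_{>0}$, and via the componentwise exp-map this corresponds to nonnegativity of the associated simplicial AM/GM-exponential on $\R^n$.

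Applying Lemma \ref{Lem:SignomialNonnegCert} with $l = r$ and $\boldsymbol{Q} = (\boldsymbol{\alp}(0) \cdots \boldsymbol{\alp}(r))$ gives: $f \geq 0$ if and only if there exists $\boldsymbol{\nu} \in \R^{r+1}_{\geq 0}$ with
\[
D(\boldsymbol{\nu},\, e\boldsymbol{f_{\alp}}) \ + \ |f_{\boldsymbol{\beta}}| \ \leq \ 0, \qquad \boldsymbol{Q}\boldsymbol{\nu} \ = \ \langle \boldsymbol{1},\boldsymbol{\nu}\rangle\boldsymbol{\beta},
\]
where we have used $-f_{\boldsymbol{\beta}} = |f_{\boldsymbol{\beta}}|$. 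I would then introduce an auxiliary slack vector $\boldsymbol{\delta} = (\delta_0,\ldots,\delta_r) \in \R^{r+1}$ and rewrite the first inequality as the combination
\[
\sum_{j=0}^r \delta_j \ + \ |f_{\boldsymbol{\beta}}| \ \leq \ 0, \qquad \nu_j \log\!\left(\frac{\nu_j}{e\, f_{\boldsymbol{\alp}(j)}}\right) \ \leq \ \delta_j \quad (j = 0,\ldots,r).
\]
Together with the linear equalities $\boldsymbol{Q}\boldsymbol{\nu} = \langle \boldsymbol{1},\boldsymbol{\nu}\rangle \boldsymbol{\beta}$ and the sign restrictions $\nu_j, f_{\boldsymbol{\alp}(j)}, |f_{\boldsymbol{\beta}}| \geq 0$, this matches the template \eqref{Equ:REP}, so that feasibility of this system is an REP in the sense of Definition \ref{Def:RelativeEntropyProgram}.

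It remains to check joint convexity in the variables $\boldsymbol{\nu}$, $f_{\boldsymbol{\alp}(0)},\ldots,f_{\boldsymbol{\alp}(r)}$, $|f_{\boldsymbol{\beta}}|$, and $\boldsymbol{\delta}$. The map $(\boldsymbol{\nu},\boldsymbol{\zeta}) \mapsto \nu_j \log(\nu_j/\zeta_j)$ on $\R^{r+1}_{\geq 0}\times \R^{r+1}_{\geq 0}$ is jointly convex; since $\zeta_j = e\, f_{\boldsymbol{\alp}(j)}$ is a linear (and hence affine) substitution preserving convexity, each entropy constraint is jointly convex in $(\boldsymbol{\nu}, \boldsymbol{f_\alp}, \boldsymbol{\delta})$. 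The remaining constraints $\sum_j \delta_j + |f_{\boldsymbol{\beta}}| \leq 0$ and $\boldsymbol{Q}\boldsymbol{\nu} = \langle \boldsymbol{1},\boldsymbol{\nu}\rangle \boldsymbol{\beta}$ are linear in the variables. Hence the program is jointly convex as claimed.

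The main conceptual obstacle, which is already resolved by the discussion preceding the theorem, is the apparent tension between the jointly convex parametrization \eqref{Equ:SignomialNonnegCert} and the non-jointly-convex barycentric parametrization \eqref{Equ:SignomialNonnegCertalternative}. For simplicial AM/GM-exponentials this is harmless: since $\boldsymbol{Q}$ has full rank (the support is a circuit), any feasible $\boldsymbol{\nu}$ is forced to be a positive scalar multiple of the barycentric coordinate vector $(\lam_0,\ldots,\lam_r)$, so the two parametrizations become compatible and the jointly convex form \eqref{Equ:SignomialNonnegCert} loses nothing. The only care needed in writing the proof is to cite this equivalence cleanly and to spell out that the introduction of $\boldsymbol{\delta}$ is what brings the condition precisely into the REP template of Definition \ref{Def:RelativeEntropyProgram}.
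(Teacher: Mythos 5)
Your argument is correct, but it follows a genuinely different route than the paper. You reduce to $f_{\boldsymbol{\beta}}<0$, pass to the positive orthant via \eqref{Equ:CircuitPolynomialsOrthantEquivalence}, apply the exp-map, and invoke the Chandrasekaran--Shah criterion of Lemma \ref{Lem:SignomialNonnegCert}, finally introducing the slack vector $\boldsymbol{\delta}$ to cast $D(\boldsymbol{\nu},e\boldsymbol{f_{\alp}})+|f_{\boldsymbol{\beta}}|\leq 0$, $\boldsymbol{Q\nu}=\langle\boldsymbol{1},\boldsymbol{\nu}\rangle\boldsymbol{\beta}$ into the template of Definition \ref{Def:RelativeEntropyProgram}; the convexity check (entropy jointly convex in $(\boldsymbol{\nu},\boldsymbol{\zeta})$, affine substitution $\zeta_j=e\,f_{\boldsymbol{\alp}(j)}$, everything else linear) is sound, and the slack step is lossless. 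The paper instead argues entirely inside SONC theory: starting from the circuit number criterion $|f_{\boldsymbol{\beta}}|\leq\Theta_f$ of Theorem \ref{Thm:CircuitPolynomialNonnegativity}, it raises the product \eqref{Equ:DefCircuitNumber} to the power $|f_{\boldsymbol{\beta}}|$ and takes logarithms, obtaining $\sum_{j}|f_{\boldsymbol{\beta}}|\lambda_j\log\bigl(|f_{\boldsymbol{\beta}}|\lambda_j/f_{\boldsymbol{\alp}(j)}\bigr)\leq 0$, and then writes this as an REP in which $\boldsymbol{\nu}$ is pinned by the linear constraints $\nu_j=|f_{\boldsymbol{\beta}}|\lambda_j$ (no factor $e$ and no balance condition $\boldsymbol{Q\nu}=\langle\boldsymbol{1},\boldsymbol{\nu}\rangle\boldsymbol{\beta}$ appear). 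The trade-off: your derivation is more modular and makes the SAGE connection explicit, but it outsources the analytic content to Lemma \ref{Lem:SignomialNonnegCert} and produces a different (though equivalent) feasibility system; the paper's elementary derivation is self-contained and, more importantly, yields exactly the barycentric constraints (1a)--(1c) that are reused verbatim in program \eqref{Equ:OurRelativeEntropyProgram} and in the proof of Theorem \ref{Thm:OurProgramisREP}, so if you wanted your version to serve that later purpose you would need a short additional argument (essentially the uniqueness of $\boldsymbol{\nu}$ up to scale for circuits, which you only mention as context) to pass from your parametrization \eqref{Equ:SignomialNonnegCert} back to the barycentric one \eqref{Equ:SignomialNonnegCertalternative}. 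As a proof of the theorem as stated, however, your proposal stands.
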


Note that the question whether a given $f(\mathbf{x})$ is a sum of monomial squares is computationally trivial such that these circuit polynomials can safely be excluded.

\begin{proof}
 By Theorem \ref{Thm:CircuitPolynomialNonnegativity} we know that the circuit polynomial $f(\mathbf{x})$ is nonnegative if and only if $|f_{\boldsymbol{\beta}}| \ \leq \ \Theta_f$.

 \begin{eqnarray*}
 |f_{\boldsymbol{\beta}}| \ \leq \ \Theta_f & \Lera & |f_{\boldsymbol{\beta}}| \cdot \prod_{j = 0}^r \left(\frac{\lambda_j}{f_{\boldsymbol{\alp}(j)}}\right)^{\lambda_j} \ \leq \ 1 \ \Lera \  \prod_{j = 0}^r \left(\frac{|f_{\boldsymbol{\beta}}| \cdot \lambda_j}{f_{\boldsymbol{\alp}(j)}}\right)^{\lambda_j} \ \leq \ 1 \\
 & \Lera & \prod_{j = 0}^r \left(\frac{|f_{\boldsymbol{\beta}}| \cdot \lambda_j}{f_{\boldsymbol{\alp}(j)}}\right)^{|f_{\boldsymbol{\beta}}| \cdot \lambda_j} \ \leq \ 1^{|f_{\boldsymbol{\beta}}|} \ = \ 1
\end{eqnarray*}

\begin{eqnarray*}
 & \Lera & \sum_{j = 0}^r |f_{\boldsymbol{\beta}}| \cdot \lambda_j \cdot \log\left(\frac{|f_{\boldsymbol{\beta}}| \cdot \lambda_j}{f_{\boldsymbol{\alp}(j)}} \right) \ \leq \ 0 \\
 & \Lera & 
 \begin{cases} 
\text{minimize} & 1 \\ 
\text{subject to:} &  
\begin{array}{cl}
 (1) & \nu_j \ = \ |f_{\boldsymbol{\beta}}| \cdot \lam_j \ \text{ for all } \ j = 0,\ldots,r, \\
 (2) & \nu_j \cdot \log\left(\frac{\nu_j}{f_{\boldsymbol{\alp}(j)}}\right) \leq \delta_j \ \text{ for all } \ j = 0,\ldots,r, \\
 (3) & \sum_{j = 0}^r \delta_j \leq 0.
\end{array}
\end{cases}
\end{eqnarray*}
\end{proof}

Note that $|f_{\boldsymbol{\beta}}|$ is redundant in the REP given in the proof of Theorem \ref{Thm:CircuitRelativeEntropy} since one can leave out the constraint (1) e.g. for $j = 0$ and replace $|f_{\boldsymbol{\beta}}|$ by $\nu_0 / \lam_0$.\\

There exists another important difference between SAGE and SONC next to the characterization of nonnegativity on $\R_{> 0}^n$ (SAGE) and nonnegativity on $\R^n$ (SONC). In the SONC cone we decompose a polynomial $f$ in a sum of nonnegative circuit polynomials $f_i$ with simplex Newton polytopes. However, in SAGE we decompose a polynomial $f$ in a sum of nonnegative AM/GM-polynomials $f_i$ such that the Newton polytopes of the $f_i$ are not simplices in general and the supports of the $f_i$ have several points in the interior of $\New(f_i)$ in general. 
If a polynomial $f$ can be decomposed in SAGE, then this certifies nonnegativity of $f$ on $\R^n_{>0}$, but not globally on $\R^n$. Stated in other words, the SAGE cone approximates the nonnegativity cone from the \textit{outside}, while the SONC cone approximates the nonnegativity cone from the \textit{inside}. However, as we showed, circuit polynomials are special since they are nonnegative on $\R^n$ if and only if they are nonnegative on $\R^{n}_{>0}$.

In the following example, which was discussed by Chandrasekaran and Shah, we demonstrate how our explicit characterization of circuit polynomials yields an explicit convex, semi-algebraic description for special nonnegativity sets compared to SDP methods.

\begin{example}[\cite{Chandrasekaran:Shah:SAGE}, page 1167]
Let 
$$\struc{S_d} \ = \ \{(a,b)\in\R^2 : x^{2d} + ax^2 + b\geq 0\}.$$
The set $S_d$ is a convex, semi-algebraic set for each $d\in\N^*$. Since a univariate polynomial is nonnegative if and only if it is a sum of squares, $S_d$ is also SDP representable, i.e., a projection of a slice of the cone of quadratic, positive semidefinite matrices of some size $\struc{w_d} \in \N^*$. As noted in \cite{Chandrasekaran:Shah:SAGE}, the algebraic degree of the boundary of $S_d$ grows with $d$ and hence the size $w_d$ of the smallest SDP description of $S_d$ must also grow with $d$. In \cite{Chandrasekaran:Shah:SAGE}, the authors use the corresponding relative entropy description \eqref{Equ:SignomialNonnegCert} of $S_d$ (note that here nonnegativity on $\R$ is the same as nonnegativity on $\R_{>0}$):
$$S_d \ = \ \{(a,b)\in\R\times \R_{\geq 0} : \exists \, \boldsymbol{\nu}\in\R^2_{\geq 0} \, \text{ such that }\, D(\boldsymbol{\nu},e \cdot (1,b)^T) \leq a, (d - 1) \nu_1 = \nu_2\}.$$ 
A major advantage of this description over the SDP method is that the size of $S_d$ does not grow with $d$.
However, we can do even better and use circuit polynomials and our Theorem \ref{Thm:CircuitPolynomialNonnegativity} to describe the convex, semi-algebraic set $S_d$ directly:
$$S_d \ = \ \left\{(a,b)\in\R\times \R_{\geq 0} : a + (d)^{\frac{1}{d}}\cdot \left(\frac{d \cdot b}{d - 1}\right)^{\frac{d - 1}{d}} \geq 0\right\}.$$ For $d = 4$ the set $S_4$ is given as the green area in Figure \ref{Fig:FeasibleSet1}.
\end{example}

\begin{figure}
\ifpictures
$\includegraphics[width=0.35\linewidth]{./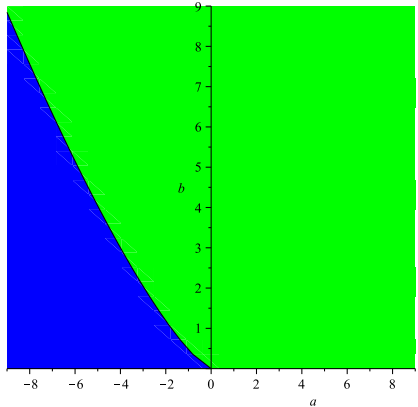}$
\fi
\caption{The set $S_4$ is shown in the green area.}
\label{Fig:FeasibleSet1}
\end{figure}

\section{The Positivstellensatz using SONC}
\label{Sec:Positivstellensatz}

In this section we analyze the SONC cone $C_{n,2d}$ and prove that $C_{n,2d}$ is full-dimensional in the nonnegativity cone $P_{n,2d}$ for every $n$ and $d$; see Theorem \ref{Thm:SONCConeFullDimensional}. In the second part of this section we formulate and prove our Positivstellensatz for sums of nonnegative circuit polynomials; see Theorem \ref{Thm:OurPositivstellensatz}.

\subsection{Analyzing the SONC Cone}
The following property of SONC polynomials stands in strong contrast to SOS polynomials.
\begin{lemma}
For every $n,d \in \N^*$ there exists $f, g \in C_{n,2d}$ such that $f\cdot g \notin C_{n,4d}$.
\label{Lem:SONCNotClosedUnderMultiplication}
\end{lemma}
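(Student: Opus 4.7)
The plan is to prove the lemma by explicit counterexample: take $g = f$ and show that $C_{n,2d}$ fails to be closed under squaring.

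In two variables, a natural candidate is the boundary circuit polynomial
\[
 f(x,y) \ = \ 1 + x^{2d} + y^{2d} - \Theta_f \cdot x^{a} y^{b},
\]
where $(a,b)$ is an interior lattice point of $\conv\{(0,0),(2d,0),(0,2d)\}$ with both coordinates odd, and the coefficient of the inner monomial equals the circuit number $\Theta_f$, so that $f$ lies on the boundary of $C_{2,2d}$. Squaring produces
\[
 f^{2} \ = \ \ldots + 2\, x^{2d} y^{2d} - 2 \Theta_f \, x^{a} y^{b} - 2 \Theta_f \, x^{2d+a} y^{b} - 2 \Theta_f \, x^{a} y^{2d+b} + \ldots ,
\]
which contains three distinct interior support points of $f^2$ with negative coefficient, one from each cross product of the negative term of $f$ with a positive term. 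For $n \neq 2$ the construction extends by adjoining monomial squares in the extra variables.

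The heart of the argument is to rule out every SONC decomposition $f^{2} = \sum_k \mu_k p_k$. Because the three negative interior points all have at least one odd coordinate, none of them can appear as an outer vertex of any circuit polynomial (outer vertices must be even), so each must be realized as the inner term of some $p_k$. Focusing first on the ``canonical'' family $p_1, p_2, p_3$ with those inner terms and outer-vertex triples $\{(0,0),(2d,0),(0,2d)\}$, $\{(2d,0),(4d,0),(2d,2d)\}$, $\{(0,2d),(0,4d),(2d,2d)\}$ respectively, the barycentric coordinates of the new inner points with respect to their natural outer triangles coincide (up to permutation) with those of the original inner point of $f$. Consequently the AM/GM bound $|b_k|\leq \Theta_{p_k}$ of Theorem \ref{Thm:CircuitPolynomialNonnegativity} factors as $\Theta_f$ times a product of outer-vertex coefficient powers. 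Matching coefficients of $f^2$ at the shared vertices and applying the AM/GM inequality $\lambda_1/\lambda_2 + \lambda_2/\lambda_1 \geq 2$ forces the sum of the $(2d,2d)$-coefficients of $p_2$ and $p_3$ to be at least $8$, while the actual coefficient of $f^2$ at $(2d,2d)$ is only $2$, a contradiction.

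The main obstacle is that this argument controls only the canonical choice of outer vertices: in principle each $p_k$ could use outer vertices at arbitrary even lattice points of $\New(f^{2})$, possibly outside the support of $f^2$, with the spurious contributions cancelled by negative inner terms of further circuit polynomials. I would close this gap by a dual argument: passing to the dual cone of $C_{n,4d}$, one constructs an explicit separating linear functional from the zero $x_0 \in \R^n_{>0}$ of $f$ (which exists because the circuit number bound of $f$ is saturated, so $f$ has a zero on the positive orthant). Since $f^2$ vanishes to order two at $x_0$, a suitable second-order evaluation at $x_0$ is nonnegative on every nonnegative circuit polynomial yet vanishes on $f^2$, ruling out arbitrary SONC decompositions beyond the canonical one and completing the proof.
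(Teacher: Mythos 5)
Your construction ($g=f$ with a boundary circuit polynomial whose inner exponent is odd) is a genuinely different route from the paper's, but as written it has a real gap, and you have located it yourself: the coefficient-matching argument only excludes the ``canonical'' decomposition, while a SONC decomposition of $f^2$ may use circuit polynomials supported on arbitrary even points of $\cL_{n,4d}$ with cancellations, and the proposed dual argument does not close this. Concretely, there is no single ``second-order evaluation at $x_0$'' that is a linear functional nonnegative on all of $C_{n,4d}$: plain evaluation at $x_0$ is nonnegative on the cone but gives $f^2(x_0)=0$, not a strict separation, and the Hessian quadratic form at $x_0$ in a fixed direction is negative on many nonnegative circuit polynomials that simply do not vanish at $x_0$. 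What would actually work is a local-order argument: in any decomposition $f^2=\sum_k \mu_k p_k$ each $p_k$ must vanish at $x_0\in\R^n_{>0}$, hence $\nabla p_k(x_0)=0$ and the Hessians are positive semidefinite; since the Hessian of $f^2$ at $x_0$ is zero (because $f(x_0)=0$ and $\nabla f(x_0)=0$), every $p_k$ would have to vanish at $x_0$ to order at least four. The missing key lemma is that this is impossible: a nonnegative circuit polynomial with a zero in the positive orthant has nonzero Hessian there (e.g.\ because $p(e^{\mathbf{t}})/e^{\langle\boldsymbol{\beta},\mathbf{t}\rangle}$ is a positive combination of exponentials of nonconstant linear forms minus a constant, hence convex with nondegenerate second-order behaviour along its support directions). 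You neither state nor prove this, and without it the argument does not conclude. Two smaller points: for $2d=2$ (and for $n=1$) the simplex $\conv\{\mathbf{0},2d\mathbf{e}_1,2d\mathbf{e}_2\}$ has no interior lattice point with both coordinates odd, so the base case of your construction needs an adjustment (a one-dimensional circuit such as $x_1^2-2x_1x_2+x_2^2$ would do); and note that because you take $g=f$, no zero-counting argument can rescue you, so the order-of-vanishing analysis is unavoidable in your approach.

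For contrast, the paper's proof avoids all local analysis: it uses the sharp bound that a nonnegative circuit polynomial, and hence any SONC polynomial, has at most $2^n$ affine zeros in $(\R^*)^n$ (\cite[Corollary 3.9]{Iliman:deWolff:Circuits}). Choosing $f\in\partial C_{n,2d}$ with even interior exponent and $f_{\boldsymbol{\beta}}=-\Theta_f$, it has exactly $2^n$ zeros in $(\R^*)^n$; perturbing the outer coefficients yields a second nonnegative circuit polynomial $g$ with $2^n$ different zeros, so $f\cdot g$ has $2^{n+1}$ zeros in $(\R^*)^n$ and therefore cannot lie in $C_{n,4d}$. If you want to keep your $g=f$ example (which is an interesting stronger statement, namely that $C_{n,2d}$ is not closed under squaring, cf.\ the corollary following the lemma), you must supply the lemma on the order of vanishing of nonnegative circuit polynomials at positive zeros; otherwise the paper's zero-counting argument is the shorter and complete route.
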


\begin{proof}
A circuit polynomial in $C_{n,2d}$ has at most $2^n$ affine real zeros in $(\R^*)^n$, which is a sharp bound for every $d \in \N^*$; see \cite[Corollary 3.9]{Iliman:deWolff:Circuits}. Thus, the same holds for a SONC polynomial since it is a sum of nonnegative circuit polynomials.  More precisely,  if we choose a circuit polynomial $f(\mathbf{x}) = \lam_0 + \sum_{j=1}^n f_j x_j^{2d} + f_{\boldsymbol{\beta}} \mathbf{x}^{\boldsymbol{\beta}} \in \partial \, C_{n,2d}$ such that $\New(f) = \Delta_{n,2d}$, then every entry $v_j$ of every zero $\mathbf{v} \in \R^n$ of $f$ satisfies $|v_j| = (\lam_j / f_j)^{1/(2d)}$. Then $f(\mathbf{x})$ is nonnegative and has exactly $2^n$ affine zeros in $(\R^*)^n$ if $f_{\boldsymbol{\beta}} = -\Theta_f$ and $\boldsymbol{\beta} \in (2\N)^n$. Therefore, for such a given $f(\mathbf{x})$ we can construct a new nonnegative circuit polynomial $g(\mathbf{x})$ with $2^n$ different affine zeros in $(\R^*)^n$ by changing every $f_j$ by a small $\eps_j \in \R$ and adjusting $f_{\boldsymbol{\beta}}$ to the new circuit number $-\Theta_g$; see \eqref{Equ:DefCircuitNumber}. The product $f(\mathbf{x}) \cdot g(\mathbf{x})$, a product of two SONC polynomials, is a polynomial with $2^n + 2^n = 2^{n+1}$ affine real zeros in $(\R^*)^n$ and of degree at most $4d$. Consequently, this product cannot be a SONC polynomial in $C_{n,4d}$. 

\end{proof}

An immediate consequence of this lemma is the following statement:

\begin{cor}
Not every square of a polynomial is a SONC polynomial. 

\vspace*{-8pt}
\raggedleft $\square$
\end{cor}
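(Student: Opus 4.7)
The plan is to exhibit a polynomial $h$ whose square $h^2$ has strictly more real zeros in $(\R^*)^n$ than any SONC polynomial can have, and thus $h^2 \notin C_{n,2e}$ for every degree $2e$. The essential ingredient, already exploited inside the proof of Lemma \ref{Lem:SONCNotClosedUnderMultiplication}, is that any SONC polynomial $P = \sum_i P_i$ with nonnegative circuit summands $P_i$ admits at most $2^n$ affine real zeros in $(\R^*)^n$: a zero of $P$ must be a common zero of every $P_i$, and each nonnegative circuit polynomial has at most $2^n$ such zeros by \cite[Corollary 3.9]{Iliman:deWolff:Circuits}. Crucially this bound is degree-independent, so it applies to polynomials of any degree, including $h^2$.

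For the concrete counterexample, I would directly reuse the construction from the proof of Lemma \ref{Lem:SONCNotClosedUnderMultiplication}. Pick a circuit polynomial $f(\mathbf{x}) = \lam_0 + \sum_{j=1}^n f_j x_j^{2d} + f_{\boldsymbol{\beta}} \mathbf{x}^{\boldsymbol{\beta}} \in \partial C_{n,2d}$ with $\boldsymbol{\beta} \in (2\N)^n$ and $f_{\boldsymbol{\beta}} = -\Theta_f$, so that $f$ has exactly $2^n$ affine real zeros in $(\R^*)^n$. Then produce a second circuit polynomial $g$ by perturbing the outer coefficients $f_j$ by small nonzero $\eps_j$ and adjusting the inner coefficient to the new circuit number $-\Theta_g$, so that $g$ also has exactly $2^n$ affine real zeros in $(\R^*)^n$ and the zero sets of $f$ and $g$ are disjoint. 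Setting $h := f \cdot g$, the product vanishes on $2^{n+1}$ distinct points of $(\R^*)^n$. Since squaring preserves the zero locus, $h^2$ also has $2^{n+1} > 2^n$ affine real zeros in $(\R^*)^n$, and hence $h^2$ cannot be a SONC polynomial.

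I do not anticipate any serious obstacle: the corollary reduces to a one-line application of the degree-independent zero bound to $h^2$, together with the observation that $h^2$ and $h$ have the same zero set. The only minor point requiring a moment of care is to record explicitly that the bound $2^n$ transfers from circuit polynomials to arbitrary SONC polynomials and is independent of their degree, so that the fact $\deg(h^2) > \deg(fg)$ is harmless.
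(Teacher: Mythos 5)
Your argument is correct and is essentially the paper's own: the corollary is stated as an immediate consequence of Lemma \ref{Lem:SONCNotClosedUnderMultiplication}, whose proof rests on exactly the degree-independent bound of at most $2^n$ affine real zeros in $(\R^*)^n$ for any SONC polynomial that you invoke. Squaring the product $f\cdot g$ from that construction preserves its $2^{n+1}$ zeros, so $h^2$ lies in no $C_{n,2e}$, which is precisely the intended reasoning.
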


These observations imply that SONC polynomials neither form a \textit{preordering} nor a \textit{quadratic module}; see \cite{Marshall:Book} for the formal definitions. Hence, we cannot expect to exploit several of the classical techniques from real algebraic geometry in order to derive a \textit{Putinar like} Positivstellensatz, since these techniques rely heavily on the fact that sums of squares form both a preordering and a quadratic module. However, this does not contradict the possibility to derive a similar result or even the exact equivalent of Putinar's Positivstellensatz for SONC polynomials. We address this topic again in the resume in Section \ref{Sec:Outlook}.

\begin{thm}
Let $n,d \in \N^*$. Then the SONC cone $C_{n,2d}$ is full-dimensional in the cone of nonnegative polynomials $P_{n,2d}$.
\label{Thm:SONCConeFullDimensional}
\end{thm}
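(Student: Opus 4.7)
My plan is to exhibit an open ball in the ambient coefficient space $\R^{|\cL_{n,2d}|}$, centered at the strictly positive polynomial $p_0(\mathbf{x}) = 1 + x_1^{2d} + \cdots + x_n^{2d}$, that is entirely contained in $C_{n,2d}$. Since $P_{n,2d}$ has nonempty topological interior in this ambient space, such a ball immediately yields $\dim C_{n,2d} = \dim P_{n,2d}$. The membership $p_0 \in C_{n,2d}$ can be verified directly: fixing any $\boldsymbol{\beta}\in\Int(\Delta_{n,2d})\cap\Z^n$, I present $p_0 = \sum_{v\in V(\Delta_{n,2d})}\mathbf{x}^v + 0\cdot\mathbf{x}^{\boldsymbol{\beta}}$ as a circuit polynomial whose outer exponents lie in $(2\N)^n$; Theorem \ref{Thm:CircuitPolynomialNonnegativity} certifies nonnegativity since $f_{\boldsymbol{\beta}}=0\leq\Theta_{p_0}$.

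For a perturbation $q = \sum_{\boldsymbol{\gamma}\in\cL_{n,2d}} q_{\boldsymbol{\gamma}}\mathbf{x}^{\boldsymbol{\gamma}}$ with $\max_{\boldsymbol{\gamma}}|q_{\boldsymbol{\gamma}}|\leq 1$, I plan to build a SONC decomposition of $p_0+\eps q$ for every sufficiently small $\eps>0$. For each non-vertex lattice point $\boldsymbol{\gamma}\in I:=\cL_{n,2d}\setminus V(\Delta_{n,2d})$, let $F(\boldsymbol{\gamma})$ denote the unique face of $\Delta_{n,2d}$ whose relative interior contains $\boldsymbol{\gamma}$; its vertex set $V(F(\boldsymbol{\gamma}))$ is a subset of $V(\Delta_{n,2d})\subset(2\N)^n$. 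Setting $M_v := \#\{\boldsymbol{\gamma}\in I : v\in V(F(\boldsymbol{\gamma}))\}$ (which is nonzero for each vertex $v$ whenever $d\geq 1$), I define
\[
c_{\boldsymbol{\gamma}}(\mathbf{x}) \;:=\; \sum_{v\in V(F(\boldsymbol{\gamma}))}\frac{1+\eps q_v}{M_v}\,\mathbf{x}^v \;+\; \eps q_{\boldsymbol{\gamma}}\,\mathbf{x}^{\boldsymbol{\gamma}}.
\]
Summing $\sum_{\boldsymbol{\gamma}\in I}c_{\boldsymbol{\gamma}}$ reproduces $p_0+\eps q$ term by term: the inner monomials recover every non-vertex coefficient of $p_0+\eps q$, and at each vertex $v$ the contributions telescope to $M_v\cdot(1+\eps q_v)/M_v = 1+\eps q_v$. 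For $\eps$ small the outer coefficients are positive, and conditions (C1), (C2) of Definition \ref{Def:CircuitPolynomial} hold because $\boldsymbol{\gamma}$ lies in the relative interior of the simplex $F(\boldsymbol{\gamma})$, so that $c_{\boldsymbol{\gamma}}$ is a genuine circuit polynomial.

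Nonnegativity of each $c_{\boldsymbol{\gamma}}$ then reduces via Theorem \ref{Thm:CircuitPolynomialNonnegativity} to $|\eps q_{\boldsymbol{\gamma}}|\leq\Theta_{c_{\boldsymbol{\gamma}}}$, and the circuit number $\Theta_{c_{\boldsymbol{\gamma}}}$ depends continuously on $\eps$ with a strictly positive limit as $\eps\to 0$. Since $I$ is finite, a uniform threshold $\eps_0>0$ (depending only on $n$ and $d$) makes the inequality hold simultaneously for every $\boldsymbol{\gamma}\in I$, every $|\eps|<\eps_0$, and every $q$ in the coefficient unit ball, placing the full open $\eps_0$-ball around $p_0$ inside $C_{n,2d}$. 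The one delicate point I expect is condition (C2), which forbids zero barycentric coordinates and therefore prevents a boundary lattice point $\boldsymbol{\gamma}$ from serving as inner term of a circuit that uses all vertices of $\Delta_{n,2d}$; passing to the minimal enclosing face $F(\boldsymbol{\gamma})$ is the natural remedy and, as a bonus, keeps all outer exponents even automatically.
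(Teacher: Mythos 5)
Your proof is correct, and at its core it runs on the same engine as the paper's: perturb an explicit SONC polynomial supported on $\Delta_{n,2d}$, absorb each non-vertex term of the perturbation as the inner term of a circuit whose outer exponents are even simplex vertices, and use the strictly positive slack in the circuit-number inequality uniformly over the finitely many lattice points. The execution, however, differs in ways worth recording. The paper starts from a full-support SONC polynomial $f$ with $\supp(f)=\cL_{n,2d}$, assumed to be already decomposed into strictly feasible circuits $f_j$, and proves $f+\eps g\in C_{n,2d}$ only for \emph{nonnegative} $g$, using Proposition \ref{Prop:NecessaryConditions} to control the vertex coefficients of $g$; you instead take the concrete center $p_0=1+\sum_{i=1}^n x_i^{2d}$, build the circuits on demand by splitting the vertex mass $(1+\eps q_v)/M_v$ among the circuits indexed by the non-vertex lattice points, and allow \emph{arbitrary} perturbations $q$ with small coefficients. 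This yields a slightly stronger and more self-contained conclusion — an explicit open ball around $p_0$ inside $C_{n,2d}$ in the full coefficient space, with a radius depending only on $n$ and $d$ — and it avoids having to posit the paper's interior full-support $f$. Your use of the minimal enclosing face $F(\boldsymbol{\gamma})$ is exactly the right way to respect (C2) for lattice points on proper faces, a case the paper's displayed decomposition (which always carries all vertex terms) passes over silently, although its conclusion is unaffected since the extra vertex terms are monomial squares. Two small points you should make explicit when writing this up: the lower bound on $\Theta_{c_{\boldsymbol{\gamma}}}$ must be uniform in $q$ as well as in $\eps$, which follows from $1+\eps q_v\geq 1-\eps$, giving $\Theta_{c_{\boldsymbol{\gamma}}}\geq\prod_{v}\bigl(1/(2M_v\lambda_v)\bigr)^{\lambda_v}$ for $\eps\leq 1/2$; and when $q_{\boldsymbol{\gamma}}=0$ (as in your presentation of $p_0$ itself) the summand degenerates to a sum of monomial squares, which the paper treats as zero-dimensional circuits, so membership in $C_{n,2d}$ is still guaranteed.
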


\begin{proof}
To prove the theorem it is sufficient to provide a single polynomial $f \in C_{n,2d}$ such that for every $g \in P_{n,2d}$ there exists a sufficiently small $\eps > 0$ such that we have $f + \eps g \in C_{n,2d}$. We choose $f$ as follows: Let $\New(f) = \Delta_{n,2d}$ be the standard simplex with edge length $2d$, i.e. $V(\New(f)) = \{\mathbf{0},2d \cdot \mathbf{e}_1,\ldots,2d \cdot \mathbf{e}_n\}$. Moreover, assume that $f$ has full support, i.e. $\supp(f) = \cL_{n,2d}$. Since $f$ is a SONC polynomial, we can write $f$ as a sum of nonnegative circuit polynomials $f_1,\ldots,f_s$ such that for every $j = 1,\ldots,s$ it holds that
\begin{eqnarray*}
 f_j(\mathbf{x}) & = & f_{j,\mathbf{0}} + \sum_{i = 1}^{r_j} f_{j,i} x_i^{2d} - f_{\boldsymbol{\beta}(j)} \mathbf{x}^{\boldsymbol{\beta}(j)},
\end{eqnarray*}
$r_j \leq n$. Furthermore, we assume that every $f_j$ is in the interior of $C_{n,2d}$, i.e. $|f_{\boldsymbol{\beta}(j)}| < \Theta(f_j)$. Thus, $f$ is in the interior of $C_{n,2d}$, too. Let 
\begin{eqnarray}
\struc{\delta} & = & \min_{1 \leq j \leq s} \left\{\Theta(f_j) - |f_{\boldsymbol{\beta}(j)}|\right\} \ > \ 0. \label{Equ:FullDimensionalProof1}
\end{eqnarray}
Let $g(\mathbf{x}) = \sum_{\boldsymbol{\alp} \in \cL_{n,2d}} g_{\boldsymbol{\alp}} \mathbf{x}^{\boldsymbol{\alp}} \in P_{n,2d}$ be arbitrary. By Proposition \ref{Prop:NecessaryConditions} we have $g_{\mathbf{0}} \geq 0$ and $g_{2d \cdot \mathbf{e}_i} \geq 0$ for $i = 1,\ldots,n$. For a given $\delta$ we choose
\begin{eqnarray}
 \struc{\eps} & = &  \min_{\stackrel{\boldsymbol{\alp} \in \cL_{n,2d} \setminus V(\New(f)),}{g_{\boldsymbol{\alp}}\neq 0}} \left\{\frac{\delta}{2 \cdot |g_{\boldsymbol{\alp}}|}\right\} \ > \ 0. \label{Equ:FullDimensionalProof2}
\end{eqnarray}
Since $f$ has full support and every $f_j$ has exactly one inner term and satisfies $V(\New(f_j)) \subseteq V(\New(f)) = V(\Delta_{n,2d})$, the exponent $\boldsymbol{\alp} \in \cL_{n,2d} \setminus \{\mathbf{0},2d \cdot \mathbf{e}_1,\ldots,2d \cdot \mathbf{e}_n\}$ of a term in $g$ equals the exponent $\boldsymbol{\beta}(j)$ of an inner term of exactly one nonnegative circuit polynomial $f_j$. Thus, it holds that
\begin{eqnarray}
 f(\mathbf{x}) + \eps \cdot g(\mathbf{x}) & = & \sum_{j = 1}^s \left(f_j(\mathbf{x}) + \eps \cdot g_{\boldsymbol{\beta}(j)} \mathbf{x}^{\boldsymbol{\beta}(j)}\right) +  \eps \cdot \left(g_{\mathbf{0}} +  \sum_{i = 1}^n g_{2d \cdot \mathbf{e}_i} \cdot x_i^{2d}\right) \label{Equ:FullDimensionalProof3}
\end{eqnarray}
for a suitable matching of the $g_{\boldsymbol{\alp}}$'s of $g(\mathbf{x})$ and the $g_{\boldsymbol{\beta}(j)}$'s. For every $j = 1,\ldots,s$ we have

\begin{eqnarray*}
 & & f_j(\mathbf{x}) + \eps \cdot g_{\boldsymbol{\beta}(j)} \mathbf{x}^{\boldsymbol{\beta}(j)} + \frac{\eps}{s} \cdot \left(g_{\mathbf{0}} +  \sum_{i = 1}^n g_{2d \cdot \mathbf{e}_i} \cdot x_i^{2d}\right) \\
 & = & f_{j,\mathbf{0}} + \frac{\eps}{s} \cdot g_{\mathbf{0}}  + \sum_{i = 1}^n \left(f_{j,i} + \frac{\eps}{s} \cdot g_{2d \cdot \mathbf{e}_i}\right) x_i^{2d} - (f_{\boldsymbol{\beta}(j)} - \eps \cdot g_{\boldsymbol{\beta}(j)}) \mathbf{x}^{\boldsymbol{\beta}(j)} \\
 & \geq & f_{j,\mathbf{0}}  + \sum_{i = 1}^n f_{j,i} x_i^{2d} - (f_{\boldsymbol{\beta}(j)} - \eps \cdot g_{\boldsymbol{\beta}(j)}) \mathbf{x}^{\boldsymbol{\beta}(j)}.
\end{eqnarray*}

Every polynomial $f_{j,\mathbf{0}}  + \sum_{i = 1}^n f_{j,i} x_i^{2d} - (f_{\boldsymbol{\beta}(j)} - \eps \cdot g_{\boldsymbol{\beta}(j)}) \mathbf{x}^{\boldsymbol{\beta}(j)}$ is a circuit polynomial. Hence, we can conclude that it is nonnegative if we show that the norm of the coefficient of its inner term is bounded by the corresponding circuit number. This is the case since 
\begin{eqnarray*}
 & & |f_{\boldsymbol{\beta}(j)} - \eps \cdot g_{\boldsymbol{\beta}(j)}| \ \stackrel{\eqref{Equ:FullDimensionalProof2}}{\leq} \ \left|f_{\boldsymbol{\beta}(j)} + \min_{\stackrel{\boldsymbol{\alp} \in \cL_{n,2d} \setminus V(\New(f)),}{g_{\boldsymbol{\alp}}\neq 0}} \left\{\frac{\delta}{2  \cdot |g_{\boldsymbol{\alp}}|}\right\} \cdot |g_{\boldsymbol{\beta}(j)}|\right| \\
 & \leq & \left|f_{\boldsymbol{\beta}(j)} + \frac{\delta}{2} \right| \ \stackrel{\eqref{Equ:FullDimensionalProof1}}{<} \ \Theta(f_j).
\end{eqnarray*}

Thus, for every $j = 1,\ldots,s$ we conclude that $f_j(\mathbf{x}) + \eps g_{\boldsymbol{\beta}(j)} \mathbf{x}^{\boldsymbol{\beta}(j)} + \frac{\eps}{s} \cdot \left(g_{\mathbf{0}} +  \sum_{i = 1}^n g_{2d \cdot \mathbf{e}_i} x_i^{2d}\right) $ is a nonnegative circuit polynomial. Hence, by \eqref{Equ:FullDimensionalProof3}, it follows that $f + \eps \cdot g \in C_{n,2d}$.
\end{proof}

\subsection{Formulation and Proof of the Positivstellensatz}
\label{SubSec:Positivstellensatz}
In this section we formulate and prove our Positivstellensatz for sums of nonnegative circuit polynomials.

\smallskip

First, we give some basic definitions and recall a representation theorem from real algebraic geometry, which we need to prove our Positivstellensatz. We use Marshall's book \cite{Marshall:Book} as a general source making some very minor adjustments.

\begin{definition}
A \struc{\textit{preprime} $P$} is a subset of $\R[\mathbf{x}]$ that contains $\R_{\geq 0}$, and that is closed under addition and multiplication. A preprime $P$ is called \struc{\textit{Archimedean}} if for every $f\in  \R[\mathbf{x}]$ there exists an integer $N\geq 1$ such that $N-f \in P$.
\endexa
\end{definition}


Let $P$ be a preprime. We define the corresponding \struc{\textit{ring of $P$-bounded elements}} of $\R[\mathbf{x}]$ as follows:
\[
\struc{H_P} \ = \ \{f\in  \R[\mathbf{x}]:  \text{ there exists an integer } N\geq 1 \text{ such that } N \pm f \in P \}.
\]
The set $H_P$ is an indicator how close a given preprime $P$ is to being Archimedean. In particular, a preprime $P$ is Archimedean if and only if $H_P=\R[\mathbf{x}]$.\\

Note that $H_P$ is actually a ring \cite[Proposition 5.1.3, (1)]{Marshall:Book}, which immediately implies the following lemma; see e.g. \cite{Scheiderer:GuidePositivitySOS}.
\begin{lemma}
Let $P \subseteq \R[\mathbf{x}]$ be a preprime. Then the following are equivalent:
\begin{enumerate}
	\item $P$ is Archimedean.
	\item There exists an integer $N\geq 1$ such that $N \pm x_i \in P$ for all $i=1,\ldots, n$.
\end{enumerate}
\label{lem:ArchEquivalence}
\end{lemma}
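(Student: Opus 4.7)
The plan is to prove the two directions separately, leaning on the stated fact that $H_P$ is a ring (Marshall \cite{Marshall:Book}, Proposition 5.1.3(1)), which the lemma hints should be used.

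For the implication (1) $\Rightarrow$ (2), I would simply unpack the definition of Archimedean: for each coordinate function $x_i$ there is an integer $N_i \geq 1$ with $N_i - x_i \in P$, and similarly applied to $-x_i$ an integer $N_i'$ with $N_i' + x_i \in P$. Setting $N = \max_{i}\{N_i, N_i'\}$, I can write $N \pm x_i = (N - N_i) + (N_i \pm x_i)$, where $N - N_i \in \R_{\geq 0} \subseteq P$, so that closure of $P$ under addition yields $N \pm x_i \in P$ for every $i$.

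For the implication (2) $\Rightarrow$ (1), I would show $H_P = \R[\mathbf{x}]$. First I would note that $\R \subseteq H_P$: for any $r \in \R$, choose an integer $N \geq |r|+1$, so that $N \pm r \in \R_{\geq 0} \subseteq P$, giving $r \in H_P$. Next, the hypothesis in (2) says precisely that $x_i \in H_P$ for all $i = 1, \ldots, n$. Since $H_P$ is a ring containing $\R$ and $x_1, \ldots, x_n$, it must contain every polynomial in these generators, and therefore $H_P = \R[\mathbf{x}]$. By definition, this says $P$ is Archimedean.

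I do not anticipate a genuine obstacle here, since the lemma is structured so that the ring property of $H_P$ does all the heavy lifting. The only subtle point is the clean passage from ``$\R_{\geq 0} \subseteq P$'' plus closure under addition to ``every real number lies in $H_P$,'' and the observation that generation of the polynomial ring by $\{x_1, \ldots, x_n\}$ over $\R$ combined with the ring structure of $H_P$ finishes the proof without further computation.
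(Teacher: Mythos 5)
Your proof is correct and follows essentially the same route as the paper: both directions hinge on $H_P$ being a subring of $\R[\mathbf{x}]$ containing $\R$ and the variables $x_i$, so that $H_P=\R[\mathbf{x}]$ and $P$ is Archimedean. The only cosmetic difference is that the paper re-derives the closure of $H_P$ under sums and products by the explicit identities $(N\pm f)+(M\pm g)=(N+M)\pm(f+g)$ and $\tfrac12\bigl((N\pm f)(M-g)+(N\pm f)(M+g)\bigr)=NM\pm fg$, whereas you cite the ring property of $H_P$ directly, as the paper itself does in the remark preceding the lemma.
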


\ifArXiv

For convenience of the reader, we give a proof here.
	
\begin{proof}
Implication (1) $\Rightarrow$ (2) is clear. Let $f, g \in \R[\mathbf{x}]$ with 
\[
N \pm f \in P \text{ and } M \pm g \in P
\]
for some $N, M \in \N^*$, so $f$ and $g$ are $P$-bounded elements. Since $P$ is closed under addition and multiplication we have
\[
(N \pm f)+(M \pm g) \ = \ (N+M)\pm (f+g)\in P \;,
\]	
and
\[
\frac{1}{2}\left((N \pm f)\cdot (M - g) + (N\pm f)\cdot(M+g)\right) \ = \ N \cdot M\pm f \cdot g \in P  \;.
\]
This means, products and sums of $P$-bounded elements are $P$-bounded; in fact $H_P$ is a subring of $\R[\mathbf{x}]$. By assumption (2) the variables $x_i$ are $P$-bounded elements and therefore every polynomial expression in the variables $x_i$ is also $P$-bounded. Thus, $P$ is Archimedean.
\end{proof}
\fi
Given $f_1,\ldots,f_s \in \R[\mathbf{x}]$ we denote by $\prep(f_1,\ldots,f_s)$ the preprime generated by the $f_1,\ldots,f_s$, i.e., the set of finite sums of elements in $\R[\mathbf{x}]$ of the form $a_{\boldsymbol{i}}f_1^{i_1}\cdots f_s^{i_s}$, where $\boldsymbol{i}=(i_1,\ldots,i_s) \in \N^s$ and $a_{\boldsymbol{i}} \in \R_{\geq 0}$:
\[
\struc{\prep(f_1,\ldots,f_s)} \ = \ \left\{\sum_{\rm finite} a_{\boldsymbol{i}} f_1^{i_1}\cdots f_s^{i_s} \ : \ \boldsymbol{i}\in \N^s, a_{\boldsymbol{i}} \in \R_{\geq 0} \right\} .
\]

The final algebraic structure, which we need to formulate the statements in this section, is a \textit{module} over a preprime:

\begin{definition}
Let $P \subseteq \R[\mathbf{x}]$ be a preprime. Then $\struc{M} \subseteq \R[\mathbf{x}]$ is a \struc{\textit{$P$-module}} if it is closed under addition, closed under multiplication by an element of $P$, and if it contains $1$. Analogous to preprimes, a $P$-module $M$ is \struc{\textit{Archimedean}} if for each $f\in  \R[\mathbf{x}]$ there exists an integer $N\geq 1$ such that $N-f \in M$.
\endexa
\label{Def:PModule}
\end{definition}

Note that $1\in M$ for a $P$-module $M$ implies that $P \subseteq M$. Obviously, $P$ itself is a $P$-module.

Now, we state the theorem, which provides the foundation for the proof of our Positivstellensatz. There exist various different variations of this statement. E.g., one prominent version is by Krivine \cite{Krivine:Positivstellensatz1,Krivine:Positivstellensatz2}. We follow Marshall's book where the reader can find an overview about the different versions; see \cite[page 79]{Marshall:Book}.

\begin{thm}[\cite{Marshall:Book}, Theorem 5.4.4]
Let $P \subseteq \R[\mathbf{x}]$ be an Archimedean preprime and let $M$ be an Archimedean $P$-module. Let $\mathcal{K}_M$ denote the semi-algebraic set of points in $\R^n$ on which every element of $M$ is nonnegative:
\[
\struc{\mathcal{K}_M} \ = \ \{\mathbf{x}\in \R^n : g(\mathbf{x})\geq 0 \; \textrm{for all} \; g\in M\}.
\]
Let $f \in \R[\mathbf{x}]$. If $f(\mathbf{x})>0$ for all $\mathbf{x}\in \mathcal{K}_M$, then $f \in M$.
\label{Thm:Representation}
\end{thm}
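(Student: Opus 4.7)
The plan is a contradiction argument: assume $f \notin M$ and construct a point $\mathbf{y} \in \mathcal{K}_M$ at which $f(\mathbf{y}) \leq 0$, contradicting the hypothesis that $f > 0$ on $\mathcal{K}_M$. The strategy is classical — extend $M$ by Zorn's lemma to a maximal module missing $f$, use maximality plus the Archimedean property to endow a quotient ring with the structure of an Archimedean ordered integral domain, and then embed this domain into $\R$ to obtain an evaluation homomorphism.

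First I would apply Zorn's lemma to the family of $P$-modules $N$ with $M \subseteq N \subseteq \R[\mathbf{x}]$ and $f \notin N$, ordered by inclusion. This family is nonempty and closed under directed unions, so a maximal element $N^{*}$ exists. Since $M \subseteq N^{*}$ and $M$ is Archimedean, Lemma \ref{lem:ArchEquivalence} (applied with $N^{*}$ in place of $P$, with $P \subseteq M \subseteq N^{*}$ multiplying $N^{*}$ into itself) shows that $N^{*}$ remains Archimedean.

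Next I would extract a total ordering on a quotient of $\R[\mathbf{x}]$ from the maximality of $N^{*}$. For every $g \notin N^{*}$, the $P$-module generated by $N^{*} \cup \{g\}$ strictly contains $N^{*}$ and hence must contain $f$; manipulating the resulting identities together with closure of $N^{*}$ under $+$ and under multiplication by $P$ yields the key structural consequences that $N^{*} \cup (-N^{*}) = \R[\mathbf{x}]$ and that $I := N^{*} \cap (-N^{*})$ is a prime ideal. The quotient $A := \R[\mathbf{x}]/I$ is then an integral domain, the image of $N^{*}$ endows it with a total order, and this order is Archimedean because $N^{*}$ is. Moreover the class of $f$ in $A$ is non-positive: otherwise one could cancel it against an element of $N^{*}$ to contradict $f \notin N^{*}$.

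In the final step I would embed $A$ as an ordered ring into $\R$. Any Archimedean totally ordered integral domain of characteristic zero admits a unique order-preserving embedding into $\R$, sending each element to the real number cut out by its Dedekind cut on $\Q \subseteq A$. Composing with the quotient $\R[\mathbf{x}] \twoheadrightarrow A \hookrightarrow \R$ gives a ring homomorphism $\varphi : \R[\mathbf{x}] \to \R$ with $\varphi(g) \geq 0$ for all $g \in N^{*}$ and $\varphi(f) \leq 0$. Since $\varphi$ is a ring homomorphism out of $\R[\mathbf{x}]$, it is evaluation at the point $\mathbf{y} := (\varphi(x_1),\ldots,\varphi(x_n)) \in \R^n$. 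Then $g(\mathbf{y}) \geq 0$ for every $g \in M \subseteq N^{*}$, so $\mathbf{y} \in \mathcal{K}_M$, yet $f(\mathbf{y}) = \varphi(f) \leq 0$, giving the desired contradiction.

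The hard part will be the middle step: showing that maximality actually forces $N^{*}$ to behave like the positive cone of a total ordering on a domain, with $I = N^{*} \cap (-N^{*})$ a prime ideal. This requires a careful cancellation argument — essentially that if $p\cdot f + n \in N^{*}$ with $p \in P$ and $n \in N^{*}$ while $f \notin N^{*}$, the Archimedean bound on $p$ can be used to push $f$ into $N^{*}$, a contradiction. Once that structural fact is in place, the remaining embedding into $\R$ is the standard Hölder/Artin–Schreier-type conclusion for Archimedean ordered fields.
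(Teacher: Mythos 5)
First, note that the paper does not prove this statement at all: it is quoted verbatim from Marshall's book (Theorem 5.4.4, Jacobi's representation theorem), so there is no in-paper argument to compare against. Your outline is indeed the classical Kadison--Dubois/Jacobi route that the cited source follows (Zorn, a maximal module excluding $f$, an order-theoretic structure on a quotient, an Archimedean embedding into $\R$ yielding an evaluation point of $\mathcal{K}_M$ at which $f\leq 0$), so the skeleton is the right one. Two small remarks: invoking Lemma~\ref{lem:ArchEquivalence} for $N^{*}$ is misplaced, since $N^{*}$ is not a preprime; it is also unnecessary, because $M\subseteq N^{*}$ with $M$ Archimedean already makes $N^{*}$ Archimedean.

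The genuine gaps are exactly at the step you yourself flag as ``the hard part,'' and the hint you give for it is not sufficient. From $b,-b\notin N^{*}$ one gets $f=m_1+p_1b=m_2-p_2b$ and hence $(p_1+p_2)f\in N^{*}$; but a $P$-module admits no cancellation of the form $pf\in N^{*}\Rightarrow f\in N^{*}$, and the natural Archimedean iteration (writing $p/k=1-u$ with $u,1-u\in P$) only yields conclusions like $f+k\,u^{m}\in N^{*}$, where $u^{m}$ need not become order-small; since a maximal $N^{*}$ is not closed, one typically only reaches approximate statements of the form ``$f+\varepsilon$ behaves well for all $\varepsilon>0$,'' which is not yet a contradiction with $f\notin N^{*}$. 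So the asserted package ``$N^{*}\cup(-N^{*})=\R[\mathbf{x}]$, $I=N^{*}\cap(-N^{*})$ a prime ideal'' is precisely the content that must be proved (and in Marshall/Jacobi it requires a delicate lemma, with the ideal property of $I$ using Archimedeanity of $P$ itself, via bounds $N-c\in P$), not a routine consequence of maximality. Second, even granting that structure, the final step is misapplied: $N^{*}$ is only a $P$-module, not closed under multiplication, so the cone $N^{*}/I$ does not a priori give the quotient the structure of a totally ordered \emph{ring}, and the H\"older/Artin--Schreier embedding theorem for Archimedean ordered domains cannot be invoked off the shelf. What one actually does is define $\varphi(b)=\inf\{q\in\Q:\,q-b\in N^{*}\}$, prove additivity from (approximate) totality, and prove multiplicativity by hand --- first for elements of $P$, then for all of $\R[\mathbf{x}]$ using that Archimedeanity of $P$ gives $\R[\mathbf{x}]=P-P$. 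Until the cancellation/totality lemma and the multiplicativity of $\varphi$ are actually supplied, the argument is a correct plan but not a proof.
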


Note that if a preprime $P$ is Archimedean, then every $P$-module $M$ is also Archimedean since $P \subseteq M$.\\

Let $f, g_1, \ldots,g_s$ be elements of the polynomial ring $\R[\mathbf{x}]$ and let 
\begin{align*}
\struc{K} \ = \ \{\mathbf{x} \in \R^n \ : \ g_i(\mathbf{x}) \geq 0,\,\, i = 1,\ldots,s\}
\end{align*}
be the basic closed semi-algebraic set given by the $g_i$'s. We consider the constrained polynomial optimization problem 
\begin{align*}
\struc{f_K^*} \ = \ \inf_{\mathbf{x} \in K}^{}f(\mathbf{x}). 
\end{align*}

In what follows we have to assume that $K$ is compact. Namely, in order to use Theorem \ref{Thm:Representation}, we need the involved preprime to be Archimedean. We ensure this by enlarging the definition of $K$ by the $2n$ many redundant constraints $N \pm x_i \geq 0$ with $N \in \N$ sufficiently large. We denote these constraints by $\struc{l_{j}(\mathbf{x})}$ for $j = 1,\ldots,2n$.
Geometrically spoken, we know that if $K$ is a compact set, then it is contained in some cube $[-N,N]^n$. Hence, if we know the edge length $N$ of such a cube, then we can add the redundant cube constraints $l_j$ to the description of $K$. We obtain:
\begin{align}\label{Equ:SemialgebraicSetRedundant}
\struc{K} \ = \ \{\mathbf{x} \in \R^n \ : \ g_i(\mathbf{x}) \geq 0 \text{ for } i = 1,\ldots,s \text{ and } l_j(\mathbf{x}) \geq 0 \text{ for } j = 1,\ldots,2n\}. 
\end{align}
Furthermore, we consider for the given compact $K$ the set of polynomials defined as products of the enlarged set of constraints
\begin{align}\label{Equ:ProductConstraint}
\struc{R_q(K)} \ = \ \left\{\prod_{k=1}^q h_k \ : \ h_k \in \{1, g_1, \ldots,g_s, l_1, \ldots, l_{2n}\}\right\}.
\end{align}
Moreover, we define $\struc{\rho_q}=|R_q(K)|$ and $\struc{\tau_q}= \max\limits_{i = 1,\ldots,s}\{\deg(g_i),1\}\cdot q$.\\

Now we state the Positivstellensatz for sums of nonnegative circuit polynomials.

\begin{thm}[Positivstellensatz for SONC]
Let $f,g_1,\ldots,g_s \in \R[\mathbf{x}]$, $K$ be a compact, basic closed semi-algebraic set as in \eqref{Equ:SemialgebraicSetRedundant}, and $R_q(K)$ be defined as in \eqref{Equ:ProductConstraint}.
If $f(\mathbf{x})$ is strictly positive for all $\mathbf{x}\in K$, then there exist $d,q \in \N^*$, SONC polynomials $s_j(\mathbf{x}) \in C_{n,2d}$, and polynomials $H_j(\mathbf{x}) \in R_q(K)$ indexed by $j = 1,\ldots,\rho_q$ such that 
$$f(\mathbf{x}) \ = \ \sum_{j=1}^{\rho_q} s_j(\mathbf{x})H_j(\mathbf{x}).$$
\label{Thm:OurPositivstellensatz}
\end{thm}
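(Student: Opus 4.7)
The plan is to apply the representation theorem of Marshall (Theorem \ref{Thm:Representation}) to an Archimedean preprime $P$ generated by the constraints of $K$ and an Archimedean $P$-module $M$ whose elements have the shape $\sum s_j H_j$ with $s_j$ SONC and $H_j$ a product of generators. This is the Krivine--Stengle style strategy (products of constraints allowed), which is exactly the right setting given Lemma \ref{Lem:SONCNotClosedUnderMultiplication}: since SONC is not closed under multiplication, a Putinar-style quadratic module is not available, but SONC is a convex cone closed under addition and multiplication by nonnegative scalars, and this suffices once every constraint factor is absorbed into the $H_j$-factor rather than into the $s_j$-factor.

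First I would let $P = \prep(g_1, \ldots, g_s, l_1, \ldots, l_{2n})$ and define
\[
M \ = \ \left\{ \sum_{j=1}^N s_j H_j \ : \ N \in \N,\ s_j \text{ a SONC polynomial},\ H_j \in R_q(K) \text{ for some } q\right\}.
\]
Since the constant $1$ is a (degenerate) nonnegative circuit polynomial and $1 \in R_0(K)$, we have $1 \in M$; closure under addition is immediate. For closure under multiplication by $P$, any $p \in P$ has the form $\sum_i a_i H_i'$ with $a_i \ge 0$ and $H_i' \in R_{q_i}(K)$, so $p\cdot (s_j H_j) = \sum_i (a_i s_j)(H_i' H_j)$ lies in $M$: the factor $a_i s_j$ is still SONC (convex cone), and $H_i' H_j$ is still a product of generators. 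Hence $M$ is a $P$-module.

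Next I would verify the Archimedean hypotheses and compute $\mathcal{K}_M$. By construction the box constraints $N \pm x_i = l_j$ lie in $P$, so Lemma \ref{lem:ArchEquivalence} yields that $P$ is Archimedean. Writing any element of $P$ as $\sum a_i H_i'$ with the constants $a_i$ viewed as SONC multipliers shows $P \subseteq M$, so $M$ inherits the Archimedean property. For $\mathcal{K}_M$: every element of $M$ is a finite sum of products of a globally nonnegative SONC polynomial with a polynomial nonnegative on $K$, so $K \subseteq \mathcal{K}_M$; conversely each $g_i$ and $l_j$ itself lies in $M$, so $\mathcal{K}_M \subseteq K$. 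Thus $\mathcal{K}_M = K$.

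Applying Theorem \ref{Thm:Representation} to $f > 0$ on $K = \mathcal{K}_M$ then gives $f \in M$, i.e.\ a finite identity $f = \sum_j \tilde s_j \tilde H_j$ with $\tilde s_j$ SONC and $\tilde H_j$ a product of the generators. I would let $q$ be the maximum number of generator factors among the $\tilde H_j$ (padding shorter products by factors of $1$ so that every $\tilde H_j$ lies in $R_q(K)$) and let $d$ be large enough so that every $\tilde s_j$ lies in $C_{n,2d}$ (using that $C_{n,2d'} \subseteq C_{n,2d}$ whenever $d' \le d$). Finally, re-indexing over all $\rho_q$ elements of $R_q(K)$ and assigning $s_j = 0 \in C_{n,2d}$ to any unused index produces the desired representation $f = \sum_{j=1}^{\rho_q} s_j H_j$.

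The main obstacle I anticipate is the verification that $M$ is a $P$-module despite Lemma \ref{Lem:SONCNotClosedUnderMultiplication}; the key observation is that we never multiply two SONC polynomials together, only a SONC polynomial by a product of constraints, where the constraint factor is absorbed into $H_j$. A secondary technicality is to justify that constants and monomial squares qualify as SONC polynomials (via the degenerate case of Definition \ref{Def:CircuitPolynomial} with $r=0$), and to extract a uniform pair $(d,q)$ from the a priori unbounded representation produced by Theorem \ref{Thm:Representation}.
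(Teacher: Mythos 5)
Your proposal is correct and follows essentially the same route as the paper: the same preprime $P = \prep(g_1,\ldots,g_s,l_1,\ldots,l_{2n})$, the same module $M$ of finite sums $\sum s_j H_j$ with $s_j$ SONC and $H_j \in R_q(K)$, verification that $M$ is an Archimedean $P$-module with $\mathcal{K}_M = K$, and an application of Theorem \ref{Thm:Representation}. Your extra care in spelling out closure under multiplication by $P$ and in extracting a uniform pair $(d,q)$ by padding with factors of $1$ and zero summands only makes explicit what the paper leaves implicit.
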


Note that the sum $\sum_{j=1}^{\rho_q} s_j(\mathbf{x})H_j(\mathbf{x})$ is of degree at most $2d + \tau_q$ and it contains a summand $s_0 \cdot 1 \in C_{n,2d}$, which is in analogy to the structure of various SOS based Positivstellens\"atze. 

\begin{proof}
Let $f,g_1,\ldots,g_s \in \R[\mathbf{x}]$ and $P \subseteq \R[\mathbf{x}]$ be the preprime generated by all polynomials $g_1,\ldots,g_s$ and the redundant linear constraints $l_1,\ldots,l_{2n}$, which we were allowed to add since $K$ is compact, i.e.
$$P \ = \ \prep(g_1,\ldots, g_s,l_1, \ldots, l_{2n}).$$
$P$ is Archimedean since it contains the cube inequalities; see Lemma \ref{lem:ArchEquivalence}. In what follows we consider the set
\begin{equation}\label{Equ:moduleM}
\struc{M} \ = \ \left\{\sum_{\text{finite}} s(\mathbf{x})H(\mathbf{x}) \ : \ \exists \,  d,q \in \N^* \text{ such that } s(\mathbf{x}) \in C_{n,2d}, H(\mathbf{x}) \in R_q(K)\right\} .
\end{equation}
\textbf{Claim 1:} $M$ is an Archimedean $P$-module. 

\ifjournal
This follows immediately from \eqref{Equ:moduleM}, Definition \ref{Def:PModule}, and $P$ being Archimedean.
\fi

\ifArXiv
By definition, $M$ is closed under addition. $1 \in M$, because $1\in R_q(K)$ and $1\in C_{n,2d}$. $M$ is closed under multiplication by an element of $P$, because $P$ is generated by the $g_i$ and $l_j$, which both are elements of $R_q(K)$, so multiplication of $m\in M$ with an element $p\in P$ lies in $M$. Thus, $M$ is a $P$-module. Furthermore, $M$ is Archimedean, since $P$ is Archimedean.
\fi

\smallskip

\noindent \textbf{Claim 2:} The nonnegativity set $\mathcal{K}_M=\{\mathbf{x}\in \R^n : g(\mathbf{x})\geq 0 \; \textrm{for all} \; g\in M\}$ equals $K$.

On the one hand, we have that $\cK_M \subseteq K$ since $M$ is a $P$-module. Thus, the polynomials defining $K$ are contained in $M$. On the other hand, a polynomial in $M$ has the form $\sum_{\text{finite}} s(\mathbf{x})H(\mathbf{x})$, such that every $s(\mathbf{x}) \in C_{n,2d}$. So, every $s(\mathbf{x})$ is nonnegative on $\R^n$. Thereby, the nonnegativity of polynomials in $M$ only depends on the polynomials $H(\mathbf{x}) \in R_q(K)$. But these polynomials are exactly products of the constraint polynomials in $K$. Thus, we can conclude $K \subseteq \cK_M$ and hence $K = \cK_M$ .

\smallskip

With Claim 1 and Claim 2 satisfied we can apply Theorem \ref{Thm:Representation} to conclude that $f \in M$. By \eqref{Equ:moduleM} the expression of the Positivstellensatz is of the desired form.
\end{proof}

For a fixed $q$, the number of elements in the set $R_q(K)$ is at most $\tbinom{s+2n+q}{q}$; thus, its cardinality is exponential in $q$. One may ask whether it is possible to formulate a Positivstellensatz involving only a linear number of terms, like Putinar's Positivstellensatz based on sum of squares decomposition for polynomial optimization problems. It would be desirable to define an object like a quadratic module of the constraint polynomials. The main difficulty in carrying out such a construction is that the product of two SONC polynomials is not a SONC polynomial in general in contrast to the product of two SOS, which are an SOS; see Lemma \ref{Lem:SONCNotClosedUnderMultiplication}, and also the resume, Section \ref{Sec:Outlook}.

\section{Application of the SONC Positivstellensatz in Constrained Polynomial Optimization Problems}
\label{Sec:Application}

In this section we establish a hierarchy of lower bounds $\struc{f_{\rm sonc}^{(d,q)}}$ given by the SONC Positivstellensatz, Theorem \ref{Thm:OurPositivstellensatz}, for the solution $f^*_K$ of a constrained polynomial optimization problem on a compact, semi-algebraic set, and we formulate an optimization problem to compute these bounds. As main results we show first that the bounds $f_{\rm sonc}^{(d,q)}$ converge against $f^*_K$ for $d,q \to \infty$, Theorem \ref{Thm:ConvergingHierarchy}, and second we show that the corresponding optimization problem is a \textit{relative entropy program} and hence efficiently solvable with interior point methods, Theorem \ref{Thm:OurProgramisREP}. We also discuss an example in Section \ref{SubSec:Example}.

\subsection{A Converging Hierarchy for Constrained Polynomial Optimization}

Minimizing a polynomial $f(\mathbf{x}) \in \R[\mathbf{x}]$ on a semi-algebraic set $K \subseteq \R^n$ is equivalent to maximizing a lower bound of this polynomial. Thus, we have:
\[
f_K^* \ = \ \inf_{\mathbf{x} \in K}^{}f(\mathbf{x}) \ = \ \sup\{\gamma\in\R \ : \ f(\mathbf{x}) - \gamma \geq 0 \text{ for all } \mathbf{x} \in K\}.
\]
To obtain a general lower bound for $f_K^*$, which is efficiently computable, we relax the nonnegativity condition to finding the real number:
\[
\struc{f_{\rm sonc}^{(d,q)}} \ = \ \sup\left\{\gamma\in\R \ : \ f(\mathbf{x}) - \gamma \, = \, \sum_{j=1}^{\rho_q} s_j(\mathbf{x})H_j(\mathbf{x})\right\}, 
\]
where $s_j(\mathbf{x}) \in C_{n,2d}$ are SONC polynomials and $H_j(\mathbf{x}) \in R_q(K)$ with $R_q(K)$ being defined as in \eqref{Equ:ProductConstraint}. Indeed, the number $f_{\rm sonc}^{(d,q)}$ is a lower bound for $f_K^*$ and grows monotonically in $d$ and $q$ as the following lemma shows.

\begin{lemma}
\label{Lem:Hierarchymonotonicity}
Let $f,g_1,\ldots,g_s \in \R[\mathbf{x}]$, and let $K$ be a semi-algebraic set. Then we have 
\begin{enumerate}
\item[(i)] $f_{\rm sonc}^{(d,q)} \leq f_K^*$ for all $d,q \in \N^*$.
\item[(ii)]	$f_{\rm sonc}^{(d,q)} \leq f_{\rm sonc}^{(\tilde{d},\tilde{q})}$ for all $d \leq \tilde{d},q \leq \tilde{q}$ with $d, \tilde{d},q, \tilde{q} \in \N^*$.
\end{enumerate}	
\end{lemma}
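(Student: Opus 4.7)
My plan is to prove the two parts by unpacking the definitions of $f_{\rm sonc}^{(d,q)}$, $C_{n,2d}$, and $R_q(K)$, and exhibiting how admissible representations behave under the natural inclusions between these objects.

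For part (i), I will fix any $\gamma \in \R$ admissible in the defining supremum of $f_{\rm sonc}^{(d,q)}$, so that $f(\mathbf{x}) - \gamma = \sum_{j=1}^{\rho_q} s_j(\mathbf{x})H_j(\mathbf{x})$ with $s_j \in C_{n,2d}$ and $H_j \in R_q(K)$. By Theorem \ref{Thm:ConeContainment}(1), every $s_j$ is nonnegative on all of $\R^n$, while every $H_j$ is, by the definition \eqref{Equ:ProductConstraint}, a product of polynomials drawn from $\{1, g_1, \ldots, g_s, l_1, \ldots, l_{2n}\}$, each of which is nonnegative on $K$ by \eqref{Equ:SemialgebraicSetRedundant}. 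Hence each summand $s_j H_j$ is nonnegative on $K$, which gives $f(\mathbf{x}) - \gamma \geq 0$ on $K$ and therefore $\gamma \leq f_K^*$. Taking the supremum over all admissible $\gamma$ yields (i).

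For part (ii), I plan to establish two set inclusions and deduce that the feasible set of the supremum at level $(d,q)$ embeds into that at level $(\tilde d, \tilde q)$. First, $C_{n,2d} \subseteq C_{n,2\tilde d}$: any support $A \subseteq \cL_{n,2d}$ satisfies $A \subseteq \cL_{n,2\tilde d}$ because $d \leq \tilde d$, and a circuit polynomial with support in $A$ remains a circuit polynomial with support in this larger lattice polytope, so Definition \ref{Def:SONC} transports directly. Second, $R_q(K) \subseteq R_{\tilde q}(K)$: any element $h_1 \cdots h_q \in R_q(K)$ equals $h_1 \cdots h_q \cdot 1 \cdots 1$, a product of $\tilde q$ elements of $\{1, g_1, \ldots, g_s, l_1, \ldots, l_{2n}\}$, hence lies in $R_{\tilde q}(K)$.

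With these two inclusions in hand, every admissible representation $f(\mathbf{x}) - \gamma = \sum_{j=1}^{\rho_q} s_j(\mathbf{x})H_j(\mathbf{x})$ at level $(d,q)$ lifts to an admissible representation at level $(\tilde d, \tilde q)$, by extending the indexing to $j = 1, \ldots, \rho_{\tilde q}$ and setting the additional SONC coefficients to the zero polynomial, which lies trivially in $C_{n,2\tilde d}$. Consequently, every $\gamma$ admissible at level $(d,q)$ is admissible at level $(\tilde d, \tilde q)$, and passing to the supremum gives $f_{\rm sonc}^{(d,q)} \leq f_{\rm sonc}^{(\tilde d, \tilde q)}$. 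The whole argument is essentially bookkeeping; the only substantive ingredients are the global nonnegativity of SONC polynomials on $\R^n$ (for (i)) and the closure of $R_q(K)$ under padding with factors of $1$ (for (ii)), so I do not anticipate any real obstacle beyond notational care.
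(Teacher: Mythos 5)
Your proposal is correct and follows essentially the same route as the paper: nonnegativity of each product $s_jH_j$ on $K$ for part (i), and the inclusions $C_{n,2d}\subseteq C_{n,2\tilde d}$ and $R_q(K)\subseteq R_{\tilde q}(K)$ (via padding with factors of $1$ and zero SONC summands) for part (ii). The paper merely states these facts more tersely; your write-up supplies the same bookkeeping in full.
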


Lemma \ref{Lem:Hierarchymonotonicity} yields a sequence $\left\{f_{\rm sonc}^{(d,q)}\right\}_{d,q \in \N^*}$ of lower bounds of $f_K^*$ which is increasing both in $d$ and $q$.

\begin{proof}~
	\begin{enumerate}
	\item[(i)] For every $s_j(\mathbf{x}) \in C_{n,2d}$ and every $H_j(\mathbf{x}) \in R_q(K)$ the polynomial $s_j(\mathbf{x})H_j(\mathbf{x})$ is nonnegative on $K$. Thus, 
	we have for every $\gamma \in \R$ and every $\mathbf{x} \in K$ that 
	$$f(\mathbf{x}) - \gamma \ =\  \sum_{j=1}^{\rho_q} s_j(\mathbf{x})H_j(\mathbf{x}) \ \Ra \ f(\mathbf{x}) - \gamma \ \geq \ 0.$$
	Hence, we have $f_{\rm sonc}^{(d,q)} \leq f_K^*$ for every $d,q \in \N^*$.
	\item[(ii)] We have $C_{n,2d}\subseteq C_{n,2\tilde{d}}$, and $R_q(K)\subseteq R_{\tilde{q}}(K)$ for all $d \leq \tilde{d},q \leq \tilde{q}$ with $d, \tilde{d},q, \tilde{q} \in \N^*$. Thus, the hierarchy of the bounds follows.
\end{enumerate}		
\end{proof}

Note that Lemma \ref{Lem:Hierarchymonotonicity} does not require $K$ to be compact. An analogous statement and proof can be given literally without involving the redundant cube constraints $l_1,\ldots,l_{2n}$ in the definition of $R_{q}(K)$.

For a \textit{compact} constraint set $K$, however, we have an asymptotic convergence to the optimum $f_K^*$ of the sequence $\left\{f_{\rm sonc}^{(d,q)}\right\}_{d,q \in \N^*}$. Thus, for compact $K$ the provided hierarchy is complete.

\begin{thm}
Let everything be defined as in Lemma \ref{Lem:Hierarchymonotonicity}. In addition, let $K$ be compact. Then
\[
f_{\rm sonc}^{(d,q)} \uparrow f_K^*\; ,\, \text{ for } \, d,q \rightarrow \infty.
\]
\label{Thm:ConvergingHierarchy}
\end{thm}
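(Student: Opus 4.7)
The plan is to combine the Positivstellensatz (Theorem \ref{Thm:OurPositivstellensatz}) with the monotonicity and upper bound from Lemma \ref{Lem:Hierarchymonotonicity} in a standard $\varepsilon$-argument. From Lemma \ref{Lem:Hierarchymonotonicity}(i) we already have $f_{\rm sonc}^{(d,q)} \leq f_K^*$ for every $d,q \in \N^*$, and from Lemma \ref{Lem:Hierarchymonotonicity}(ii) the sequence is monotonically nondecreasing in both indices, so it suffices to show that for every $\varepsilon > 0$ there exist $d,q \in \N^*$ with $f_{\rm sonc}^{(d,q)} \geq f_K^* - \varepsilon$.

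Fix $\varepsilon > 0$ and consider the polynomial
\[
\tilde f(\mathbf{x}) \ = \ f(\mathbf{x}) - f_K^* + \varepsilon.
\]
By the very definition of $f_K^*$ as the infimum of $f$ over $K$, we have $\tilde f(\mathbf{x}) \geq \varepsilon > 0$ for all $\mathbf{x} \in K$; in particular $\tilde f$ is strictly positive on the compact, basic closed semi-algebraic set $K$. Our Positivstellensatz, Theorem \ref{Thm:OurPositivstellensatz}, then applies and yields integers $d,q \in \N^*$, SONC polynomials $s_j \in C_{n,2d}$, and products $H_j \in R_q(K)$ such that
\[
f(\mathbf{x}) - (f_K^* - \varepsilon) \ = \ \tilde f(\mathbf{x}) \ = \ \sum_{j=1}^{\rho_q} s_j(\mathbf{x})\, H_j(\mathbf{x}).
\]
This is precisely the condition certifying that $\gamma = f_K^* - \varepsilon$ is feasible for the supremum defining $f_{\rm sonc}^{(d,q)}$, so $f_{\rm sonc}^{(d,q)} \geq f_K^* - \varepsilon$.

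Combining this with Lemma \ref{Lem:Hierarchymonotonicity}(i) gives $f_K^* - \varepsilon \leq f_{\rm sonc}^{(d,q)} \leq f_K^*$ for these specific $d,q$, and by Lemma \ref{Lem:Hierarchymonotonicity}(ii) the same two-sided bound persists for all $(\tilde d, \tilde q)$ with $\tilde d \geq d$, $\tilde q \geq q$. Since $\varepsilon > 0$ was arbitrary, the monotone sequence converges upward to $f_K^*$, which is the claim. There is no serious technical obstacle here; the whole argument is essentially a restatement of the Positivstellensatz in optimization language, and the only place where compactness of $K$ enters is precisely where it entered in Theorem \ref{Thm:OurPositivstellensatz}, namely to guarantee that the relevant preprime (enlarged by the cube constraints $l_1,\ldots,l_{2n}$) is Archimedean.
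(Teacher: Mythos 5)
Your proof is correct and follows the same route as the paper: apply the Positivstellensatz to $f - (f_K^* - \varepsilon)$, which is strictly positive on the compact set $K$, to obtain feasibility of $\gamma = f_K^* - \varepsilon$ for some $(d,q)$, and then invoke the upper bound and monotonicity from Lemma \ref{Lem:Hierarchymonotonicity}. Nothing essential differs from the paper's argument.
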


Note that $q$ is bounded from above by the chosen $d$. Therefore, it is sufficient to investigate $d \to \infty$ and choose for every $d$ the corresponding maximal $q$.

\begin{proof}
Let $\varepsilon > 0$ be arbitrary. Then $f(\mathbf{x})-(f_K^*-\varepsilon)$ is strictly positive on $K$ for all $\mathbf{x} \in \R^n$. According to Theorem \ref{Thm:OurPositivstellensatz}, there exist sufficiently large $d, q \in \N^*$ such that $f(\mathbf{x}) -f_K^*+\varepsilon = \sum_{j=1}^{\rho_q} s_j(\mathbf{x})H_j(\mathbf{x})$. Thus,
\begin{align}\label{Equ:SequenceIneq}
f_K^*-\varepsilon \ \leq \ f_{\rm sonc}^{(d,q)},
\end{align}
by definition of $f_{\rm sonc}^{(d,q)}$. Since $d,q \rightarrow \infty$, \eqref{Equ:SequenceIneq}  holds for all $\varepsilon \downarrow 0$ for sufficiently large $ d,q$. By Lemma \ref{Lem:Hierarchymonotonicity} (ii) the values $f_{\rm sonc}^{(d,q)}$ are monotonically increasing in $d,q$ and the result follows.
\end{proof}

\subsection{Computation of the new Hierarchy via Relative Entropy Programming}
Let $n,2d,q$ be fixed. We intend to compute $f^{(d,q)}_{\rm sonc}$ via a suitable optimization program. This means for $f \in \R[\mathbf{x}]$ and a compact set $K$ we are looking for the maximal $\gamma \in \R$ such that
\begin{eqnarray}
 f(\mathbf{x}) - \gamma \ = \ \sum_{\rm finite} H_\ell(\mathbf{x}) s_\ell(\mathbf{x}), \label{Equ:OurRelativeEntropyProgramConstruction1}
\end{eqnarray}
where $H_\ell(\mathbf{x}) \in R_q(K)$ and $s_\ell(\mathbf{x}) \in C_{n,2d}$. We formulate such a program in \eqref{Equ:OurRelativeEntropyProgram} and show in Theorem \ref{Thm:OurProgramisREP} that this program is a relative entropy program and hence efficiently solvable.\\

In what follows it is sufficient to consider nonnegative circuit polynomials instead of general SONC polynomials. Namely, since every $s_\ell(\mathbf{x}) \in C_{n,2d}$ in \eqref{Equ:OurRelativeEntropyProgramConstruction1} is of the form $\sum_{\rm finite} p_{i,\ell}(\mathbf{x})$ where every $p_{i,\ell}(\mathbf{x})$ is a nonnegative circuit polynomial, we can split up every term $H_\ell(\mathbf{x}) s_\ell(\mathbf{x})$ into $\sum_{\rm finite} H_\ell(\mathbf{x}) p_{i,\ell}(\mathbf{x})$ by distribution law.

Recall that $\Circ_A$ denotes the set of all circuit polynomials with support $A \subset \Z^n$, that $\Delta_{n,2d}$ denotes the standard simplex in $n$ variables of edge length $2d$, and that we defined $\cL_{n,2d} = \Delta_{n,2d} \cap \Z^n$. The support of every circuit polynomial is contained in a sufficiently large scaled standard simplex $\Delta_{n,2d}$. We define
\begin{eqnarray*}
 \struc{\Circ_{n,2d}} & = & \{p \in \Circ_A \ : \ A \subseteq \cL_{n,2d} \},
\end{eqnarray*}
that is the set of all circuit polynomials with a support $A$ which is contained in $\Delta_{n,2d}$.

Let $f(\mathbf{x}) = f_{\mathbf{0}} + \sum_{\boldsymbol{\eta} \in \cL_{n,2d+\tau_q} \setminus \{\mathbf{0}\}} f_{\boldsymbol{\eta}} \mathbf{x}^{\boldsymbol{\eta}} \in \R[\mathbf{x}]$. Note that we allow $f_{\boldsymbol{\eta}} = 0$. Furthermore, let $K$ be a compact, semi-algebraic set given by a list of constraints $g_1,\ldots,g_s$. Here, we simplify the notation by assuming that the $g_i$ already contain the linear constraints $l_1,\ldots,l_{2n}$, which we added in Section \ref{Sec:Positivstellensatz}. Let
\begin{eqnarray*}
 \Circ_{n,2d} & = & \Circ_{A(1)} \sqcup \cdots \sqcup \Circ_{A(t)},
\end{eqnarray*}
where $\struc{A(1)},\ldots,\struc{A(t)} \subseteq \cL_{n,2d}$ is the finite list of possible support sets of circuit polynomials in $\Delta_{n,2d}$. We use the notation
\begin{eqnarray*}
 \struc{\Circ_{A(i)}} & = & \left\{\sum_{j = 0}^{r_i} c_{\boldsymbol{\alp}(j,i)} \mathbf{x}^{\boldsymbol{\alp}(j,i)} + \eps \cdot c_{\boldsymbol{\beta}(i)} \mathbf{x}^{\boldsymbol{\beta}(i)} \ : \ \begin{array}{l}
c_{\boldsymbol{\alp}(j,i)}, \ c_{\boldsymbol{\beta}(i)} \in \R_{\geq 0}, \\
\text{and } \struc{\eps} \in \{1,-1\} \\
\end{array}
\right\}.
\end{eqnarray*}
We denote by $\struc{\lam_{0,i},\ldots,\lam_{r_i,i}}$ the barycentric coordinates satisfying $\sum_{j = 0}^{r_i} \lam_{j,i} \boldsymbol{\alp}(j,i) = \boldsymbol{\beta}(i)$. Let $R_q(K) = \{H_1,\ldots,H_{\rho_q}\}$ such that $H_\ell(\mathbf{x}) = \sum_{\jmath = 1}^{k_\ell} H_{\boldsymbol{\gamma}(\jmath,\ell)} \mathbf{x}^{\boldsymbol{\gamma}(\jmath,\ell)}$ with $\struc{H_{\boldsymbol{\gamma}(\jmath,\ell)}} \in \R$. Moreover, we define the following support vectors
\begin{eqnarray*}
 \struc{\supp(\Circ_{n,2d})} & = & [\boldsymbol{\alp}(j,i),\boldsymbol{\beta}(i) \ : \ i = 1,\ldots,t, \, j = 0,\ldots,r_i], \\
 \struc{\supp(R_q(K))} & = &  [\boldsymbol{\gamma}(\jmath,\ell) \ : \ \ell = 1,\ldots,\rho_q,\, \jmath = 0,\ldots,k_\ell].
\end{eqnarray*}
That means, $\supp(\Circ_{n,2d})$ is the vector which contains all exponents contained in $A(1),\ldots,$ $A(t)$ with repetition. Similarly, $\supp(R_q(K))$ is the vector which contains all exponents contained in the supports of $H_1,\ldots,H_{\rho_q}$ with repetition.  By construction, we have that $\supp(\Circ_{n,2d})$ is contained in $\cL_{n,2d}$, and every entry of $\supp(R_q(K))$ is contained in $\cL_{n,\tau_q}$. 

By \eqref{Equ:OurRelativeEntropyProgramConstruction1} we have to construct an optimization program which guarantees that for every exponent $\boldsymbol{\eta} \in \cL_{n,2d+\tau_q}$ we have that the term $f_{\boldsymbol{\eta}} \mathbf{x}^{\boldsymbol{\eta}}$ of the given polynomial $f$, which has to be minimized, equals the sums of a term with exponent $\boldsymbol{\eta}$ in $\sum_{\rm finite} H_\ell s_\ell$ with $H_\ell \in R_q(K)$ and $s_\ell \in C_{n,2d}$.
Thus, we have to (1) guarantee that the involved functions are indeed sums of nonnegative circuit polynomials and (2) we have to add a linear constraint for every $\boldsymbol{\eta} \in \cL_{n,2d+\tau_q}$ to match the coefficients of the terms with exponent $\boldsymbol{\eta}$ in $f$ with the coefficients of the terms with exponent $\boldsymbol{\eta}$ in $\sum_{\rm finite} H_\ell s_\ell$; see \eqref{Equ:OurRelativeEntropyProgramConstruction1}.\\

Let $R$ be the subset of a real space given by

\begin{eqnarray*}
 \struc{R} & = & \left\{c^{(\ell,\eps)}_{\boldsymbol{\alp}(j,i)}, c^{(\ell,\eps)}_{\boldsymbol{\beta}(i)},\nu^{(\ell,\eps)}_{j,i} \in \R_{\geq 0}, \delta^{(\ell,\eps)}_{j,i} \in \R \ : \ 
 \begin{array}{l}
 \text{for every } \ell = 1,\ldots,\rho_q, \eps \in \{1,-1\},\\
 \text{and } \boldsymbol{\alp}(j,i),\boldsymbol{\beta}(i) \in \supp(\Circ_{n,2d}) \\
 \end{array}
 \right\}.
\end{eqnarray*}

Note that we are constructing a relative entropy program. The $\struc{\nu^{(\ell,\eps)}_{j,i}} \in \R_{\geq 0}$, and $\struc{\delta^{(\ell,\eps)}_{j,i}} \in \R$ in $R$ form the vectors $\boldsymbol{\nu}$ and $\boldsymbol{\delta}$ of variables in the general form of a relative entropy program as defined in Definition \ref{Def:RelativeEntropyProgram}.

In order to match the coefficients of $f$ with a representing polynomial coming from our Positivstellensatz, we define for every $\boldsymbol{\eta} \in \cL_{n,2d+\tau_q} \setminus \{\mathbf{0}\}$ the following linear functions  from $R$ to $\R$:
\begin{eqnarray*}
 \struc{\Gamma_1(\boldsymbol{\eta)}} & = & \sum\limits_{\substack{\boldsymbol{\beta}(i)+\boldsymbol{\gamma}(\jmath,\ell) = \boldsymbol{\eta} \\ \boldsymbol{\beta}(i) \in \supp(\Circ_{n,2d}) \\ \boldsymbol{\gamma}(\jmath,\ell) \in \supp(R_q(K)) \\ \eps \in \{1,-1\}}} \eps \cdot c^{(\ell,\eps)}_{\boldsymbol{\beta}(i)} \cdot H_{\boldsymbol{\gamma}(\jmath,\ell)}, \quad
 \struc{\Gamma_2(\boldsymbol{\eta})} \ = \ \sum\limits_{\substack{\boldsymbol{\alp}(j,i)+\boldsymbol{\gamma}(\jmath,\ell) = \boldsymbol{\eta} \\ \boldsymbol{\alp}(j,i) \in \supp(\Circ_{n,2d}) \\ \boldsymbol{\gamma}(\jmath,\ell) \in \supp(R_q(K)) \\ \eps \in \{1,-1\}}} c^{(\ell,\eps)}_{\boldsymbol{\alp}(j,i)} \cdot H_{\boldsymbol{\gamma}(\jmath,\ell)}
\end{eqnarray*}
where the $H_{\boldsymbol{\gamma}(\jmath,\ell)}$ are constants given by the coefficients of the functions $H_1,\ldots,H_{\rho_q}$.\\

We define an optimization program to compute $f_{\rm sonc}^{(d,q)}$. In what follows, the variables $\nu^{(\ell,\eps)}_{j,i}$ and $\delta^{(\ell,\eps)}_{j,i}$ are completely redundant for the actual optimization process; see (1a), (1b), and (1c). We only have to introduce them to guarantee that the program \eqref{Equ:OurRelativeEntropyProgram} has the form of a relative entropy program.

\begin{eqnarray}
& & 
\begin{cases}
\text{minimize} & \sum\limits_{\substack{\boldsymbol{\alp}(j,i)+\boldsymbol{\gamma}(\jmath,\ell) = \mathbf{0} \\ \boldsymbol{\alp}(j,i) \in \supp(\Circ_{n,2d}) \\ \boldsymbol{\gamma}(\jmath,\ell) \in \supp(R_q(K)) \\ \eps \in \{1,-1\}}} c^{(\ell,\eps)}_{\boldsymbol{\alp}(j,i)} \cdot H_{\boldsymbol{\gamma}(\jmath,\ell)} \medskip \\
& 
\text{ over the subset } R' \text{ of } R \text{ defined by:} \\
& \\
(1a) & \nu^{(\ell,\eps)}_{j,i} \ = \ c^{(\ell,\eps)}_{\boldsymbol{\beta}(i)} \cdot \lam_{j,i} \quad 
\begin{array}{l}
 \text{for all } \ \ell = 1,\ldots,\rho_q; \eps \in \{1,-1\}; \\
 j = 0,\ldots,r_i; i = 1,\ldots,t \, , \\
\end{array} \\
(1b) & \nu^{(\ell,\eps)}_{j,i} \cdot \log\left(\frac{\nu^{(\ell,\eps)}_{j,i}}{c_{\boldsymbol{\alp}(j,i)}}\right) \leq \delta^{(\ell,\eps)}_{j,i} \quad
\begin{array}{l}
\text{for all } \ \ell = 1,\ldots,\rho_q; \eps \in \{1,-1\}; \\
j = 0,\ldots,r_i; i = 1,\ldots,t \, ,  \\
\end{array} \\
(1c) & \sum_{j = 0}^{r_i} \delta^{(\ell,\eps)}_{j,i} \leq 0 \ \text{ for all } \ \ell = 1,\ldots,\rho_q; \eps \in \{1,-1\}; i = 1,\ldots,t \, , \\
 (2) & \Gamma_1(\boldsymbol{\eta}) + \Gamma_2(\boldsymbol{\eta}) \ = \ f_{\boldsymbol{\eta}} \ \text{ for every } \ \boldsymbol{\eta} \in \cL_{n,2d+\tau_q} \setminus \{\mathbf{0}\} \, .
\end{cases}
\label{Equ:OurRelativeEntropyProgram}
\end{eqnarray}

\begin{thm}
The program \eqref{Equ:OurRelativeEntropyProgram} is a relative entropy program and hence efficiently solvable, and its output coincides with $f_\mathbf{0} - f_{\rm sonc}^{(d,q)}$.
\label{Thm:OurProgramisREP}
\end{thm}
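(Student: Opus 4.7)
The theorem has two parts: the structural claim that \eqref{Equ:OurRelativeEntropyProgram} is an REP in the sense of Definition \ref{Def:RelativeEntropyProgram}, and the semantic claim that its optimal value equals $f_\mathbf{0} - f_{\rm sonc}^{(d,q)}$. For the structural claim I will go through \eqref{Equ:OurRelativeEntropyProgram} line by line and match it to \eqref{Equ:REP}. The objective $\Gamma_2(\mathbf{0})$ is linear in the decision variables, since the $H_{\boldsymbol{\gamma}(\jmath,\ell)}$ are fixed real constants. Constraints (1a), (1c), and (2) are linear (equalities being two linear inequalities each), while (1b) is literally the relative entropy cone constraint $\nu_j \log(\nu_j/\zeta_j) \leq \delta_j$ with $\nu = \nu^{(\ell,\eps)}_{j,i}$, $\zeta = c^{(\ell,\eps)}_{\boldsymbol{\alp}(j,i)}$, and $\delta = \delta^{(\ell,\eps)}_{j,i}$. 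Nonnegativity of the $c$- and $\nu$-variables is built into the domain $R$, so the program has the form \eqref{Equ:REP} and is efficiently solvable by \cite{Nesterov:Nemirovskii,Chandrasekaran:Shah:RelativeEntropyApplications}.

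For the semantic claim I will set up a correspondence between feasible points of \eqref{Equ:OurRelativeEntropyProgram} and representations
$$f(\mathbf{x}) - \gamma \ = \ \sum_{\ell,\eps,i} H_\ell(\mathbf{x})\, p^{(\ell,\eps)}_i(\mathbf{x}), \quad p^{(\ell,\eps)}_i(\mathbf{x}) \ = \ \sum_{j=0}^{r_i} c^{(\ell,\eps)}_{\boldsymbol{\alp}(j,i)}\mathbf{x}^{\boldsymbol{\alp}(j,i)} + \eps \cdot c^{(\ell,\eps)}_{\boldsymbol{\beta}(i)}\mathbf{x}^{\boldsymbol{\beta}(i)}.$$
By the chain of equivalences in the proof of Theorem \ref{Thm:CircuitRelativeEntropy}, the combined constraints (1a)--(1c) are equivalent to $|\eps \cdot c^{(\ell,\eps)}_{\boldsymbol{\beta}(i)}| \leq \Theta_{p^{(\ell,\eps)}_i}$, which by Theorem \ref{Thm:CircuitPolynomialNonnegativity} certifies nonnegativity of $p^{(\ell,\eps)}_i$. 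Enumerating over the finite list of supports $A(1),\ldots,A(t) \subseteq \cL_{n,2d}$ and over both signs $\eps \in \{1,-1\}$ captures every nonnegative circuit polynomial of degree at most $2d$ appearing in a SONC decomposition. Constraint (2) then enforces, for each nonzero $\boldsymbol{\eta} \in \cL_{n, 2d+\tau_q}$, that the coefficient of $\mathbf{x}^{\boldsymbol{\eta}}$ in $\sum_{\ell,\eps,i} H_\ell\, p^{(\ell,\eps)}_i$ equals $f_{\boldsymbol{\eta}}$, because $\Gamma_1(\boldsymbol{\eta})$ and $\Gamma_2(\boldsymbol{\eta})$ collect precisely the inner- and outer-term contributions producing the monomial $\mathbf{x}^{\boldsymbol{\eta}}$.

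The crucial step is identifying the objective with $f_\mathbf{0} - \gamma$. The key observation is that every inner exponent $\boldsymbol{\beta}(i)$ lies in the interior of the simplex Newton polytope of the corresponding circuit and is therefore componentwise strictly positive; combined with $\boldsymbol{\gamma}(\jmath,\ell) \in \N^n$, this makes $\boldsymbol{\beta}(i) + \boldsymbol{\gamma}(\jmath,\ell) = \mathbf{0}$ impossible, so $\Gamma_1(\mathbf{0}) = 0$. Hence the constant term of $\sum_{\ell,\eps,i} H_\ell p^{(\ell,\eps)}_i$ equals $\Gamma_2(\mathbf{0})$, and setting $\gamma := f_\mathbf{0} - \Gamma_2(\mathbf{0})$ yields the desired representation identity. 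Minimizing $\Gamma_2(\mathbf{0})$ is therefore equivalent to maximizing $\gamma$ over all valid SONC-Positivstellensatz certificates of the prescribed degrees, which is exactly $f_{\rm sonc}^{(d,q)}$, and the output of \eqref{Equ:OurRelativeEntropyProgram} equals $f_\mathbf{0} - f_{\rm sonc}^{(d,q)}$.

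The main obstacle I anticipate is verifying tightness of the encoding: constraints (1a)--(1c) always enforce $|c^{(\ell,\eps)}_{\boldsymbol{\beta}(i)}| \leq \Theta_{p^{(\ell,\eps)}_i}$, which is strictly stronger than necessary in the case $\eps = +1$ with $\boldsymbol{\beta}(i) \in (2\N)^n$, where nonnegativity is automatic. I will argue that this over-constraining does not shrink the representable set, because any such purely sum-of-monomial-squares contribution can equivalently be produced by choosing $c^{(\ell,+1)}_{\boldsymbol{\beta}(i)} = 0$ and redistributing monomial squares via a degenerate support pattern enumerated in the list $A(1),\ldots,A(t)$. Together with the exhaustiveness of the support enumeration, this is the technical core of establishing both directions of the correspondence between feasible points and valid Positivstellensatz representations.
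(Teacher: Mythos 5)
Your proposal is correct and follows essentially the same route as the paper's proof: verify line by line that \eqref{Equ:OurRelativeEntropyProgram} has the form \eqref{Equ:REP}, use Theorem \ref{Thm:CircuitRelativeEntropy} to convert constraints (1a)--(1c) into the circuit-number bound certifying nonnegativity of the circuit polynomials, let constraint (2) enforce the coefficient matching, and identify minimizing the objective (the constant term of $\sum H_\ell s_\ell$) with maximizing $\gamma$. The additional points you flag, namely that $\Gamma_1(\mathbf{0})=0$ (which holds simply because $\boldsymbol{\beta}(i)\neq\mathbf{0}$ and all exponents lie in $\N^n$, not because $\boldsymbol{\beta}(i)$ is componentwise strictly positive) and the tightness question for even inner exponents with positive coefficient, are refinements of details that the paper's proof passes over without comment.
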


\begin{proof}
First, we show that \eqref{Equ:OurRelativeEntropyProgram} is indeed a relative entropy program, i.e. we need to show that it is of the form \eqref{Equ:REP} in Definition \ref{Def:RelativeEntropyProgram}. Constraint (1b) in \eqref{Equ:OurRelativeEntropyProgram} is a constraint of the form (2) in \eqref{Equ:REP} satisfying $\nu^{(\ell,\eps)}_{j,i},c^{(\ell,\eps)}_{\boldsymbol{\alp}(j,i)} \geq 0$ and $\delta^{(\ell,\eps)}_{j,i} \in \R$ as required. The constraints (1a),(1c), and (2) in \eqref{Equ:OurRelativeEntropyProgram} are linear constraints since all $\lam_{j,i}$, $H_{\boldsymbol{\gamma}(\jmath,\ell)}$, and $\eps$ are constants; note that linear equalities can be represented by two linear inequalities. Thus, these constraints are of the form (1) in \eqref{Equ:REP}. Finally, the objective function is also linear as required by \eqref{Equ:REP}. Hence, \eqref{Equ:OurRelativeEntropyProgram} is a relative entropy program by Definition \ref{Def:RelativeEntropyProgram}.\\

Second, we need to show that the program provides the correct output. Note that the program is infeasible if there exist $i,j,\ell$ such that $c^{(\ell,\eps)}_{\boldsymbol{\alp}(j,i)} = 0$ and $c^{(\ell,\eps)}_{\boldsymbol{\beta}(i)} \cdot \lam_{j,i} > 0$. Hence, we can omit this case. By Theorem \ref{Thm:CircuitRelativeEntropy} the union of the constraints (1a),(1b), and (1c) are equivalent to a constraint
\begin{eqnarray*}
 (3) & c^{(\ell,\eps)}_{\boldsymbol{\beta}(i)} \prod\limits_{j = 0}^{r_i}
                    \left(\frac{\lambda_{j,i}}{c^{(\ell,\eps)}_{\boldsymbol{\alp}(j,i)}}\right)^{ \lambda_{j,i}} \leq 1 \ \text{ for every } \ i = 1,\ldots,t, \ \eps \in \{1,-1\}.
\end{eqnarray*}

The variables $c^{(\ell,\eps)}_{\boldsymbol{\alp}(j,i)}$ and $c^{(\ell,\eps)}_{\boldsymbol{\beta}(i)}$ in the program \eqref{Equ:OurRelativeEntropyProgram} are by construction the coefficients of circuit polynomials. For the purpose of the program, these circuit polynomials need to be nonnegative; see \eqref{Equ:OurRelativeEntropyProgramConstruction1}. This is guaranteed by constraint (3).

For every $\boldsymbol{\eta} \in \cL_{n,2d+\tau_q} \setminus \{\mathbf{0}\}$ constraint (2) guarantees that every coefficient $f_{\boldsymbol{\eta}}$ equals $\Gamma_1(\boldsymbol{\eta}) + \Gamma_2(\boldsymbol{\eta})$, which are exactly all polynomials of the form $\sum_{\rm finite} H_\ell s_\ell$, where $H_\ell \in R_q(K)$ and $s_\ell \in C_{n,2d}$. Particularly, it is sufficient to consider (nonnegative) circuit polynomials in $\Gamma_1(\boldsymbol{\eta})$ and $\Gamma_2(\boldsymbol{\eta})$ instead of SONC polynomials. Namely, for every term $H_\ell s_\ell$ with $s_\ell \in C_{n,2d}$ we can write $s_\ell = \sum_{\rm finite} p_{i,\ell}$, where $p_{i,\ell}$ are nonnegative circuit polynomials. Thus, on the one hand, we obtain an expression $H_\ell s_\ell = \sum_{\rm finite} H_\ell p_{i,\ell}$ which only depends on circuit polynomials. On the other hand, we can guarantee that \eqref{Equ:OurRelativeEntropyProgramConstruction1} is satisfied, which we need to show. Finally, the program minimizes the constant term of the function $\sum_{\rm finite} H_\ell s_\ell$, where $H_\ell \in R_q(K)$, which is equivalent to maximizing $\gamma$.
\end{proof}

\subsection{An Example}
\label{SubSec:Example}

We consider the polynomial $f = x_1^3 + x_2^3 - x_1x_2 + 4$ and a semialgebraic set $K$ given by constraints $g_1 = -x_1 + 1, g_2 = x_1 + 1, g_3 = -x_2 + 1, g_4 = x_2 + 1$. It is easy to see that $f$ is positive on $K$. We want to represent $f$ with our Positivstellensatz \ref{Thm:OurPositivstellensatz}. We consider $C_{2,4}$.

\begin{figure}
\ifpictures
$\includegraphics[width=0.35\linewidth]{./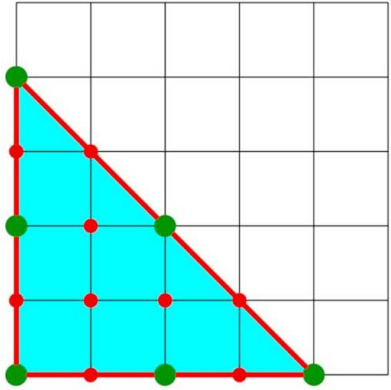}$
\fi
\caption{$\Delta_{2,4}$ with the lattice points $\cL_{2,4}$. The even points are the green ones.}
\label{Fig:NewtonPolytope}
\end{figure}

$\Circ_{2,4}$ is a union of $28$ different support sets. There exist:
\begin{itemize}
 \item six even lattice points in $\cL_{2,4}$ and thus $6$ zero dimensional circuit polynomials, 
 \item $\binom{6}{2} = 15$ circuit polynomials with one dimensional Newton polytope, and
 \item $\binom{6}{3}$ even $2$-simplices, which are contained in $\Delta_{2,4}$. One simplex contains three lattice points in the interior, four contain one lattice point in the interior, and the remaining ones contain no lattice point in the interior. Thus, we only need to consider seven circuit polynomials with 2-dimensional Newton polytope.
\end{itemize}

The number of elements in $R_q(K)$ is $\rho_q = \tbinom{4+q}{q}$; see Section \ref{SubSec:Positivstellensatz}. I.e., we have in this example $\rho_1 = 5$, $\rho_2 = 15$, $\rho_3 = 35$.

Let us assume that we want to compute $f_{\rm sonc}^{(2,1)}$. We are looking for the maximal $\gamma$ such that $f - \gamma$ can be represented as a sum $s_j(\mathbf{x})H_j(\mathbf{x})$ with $s_j(\mathbf{x}) \in C_{2,4}$ and $H_j(\mathbf{x}) \in R_1(K)$.
We would, however, not consider all these polynomials in practice. First, the circuit polynomials with 1-dimensional Newton polytope are sufficient, to construct every lattice point in $\cL_{2,4}$ and thus it makes sense to disregard all 2-simplices. Second, $f$ does not contain every lattice point in $\cL_{2,4}$ as an exponent and hence it is not surprising that several further circuit polynomials can be omitted. Indeed, we find a decomposition according to the Positivstellensatz \ref{Thm:OurPositivstellensatz} of the form

\begin{eqnarray*}
f(\mathbf{x}) & = & (x_1 + 1) \cdot (x_1^2 - 2 x_1 + 1) + (x_2 + 1) \cdot (x_2^2 - 2 x_2 + 1) + 1 \cdot \left(\frac{1}{2}x_1^2 - x_1x_2 + \frac{1}{2}x_2^2\right) \\
 & & + 1 \cdot \left(\frac{1}{2}x_1^2 + x_1 + \frac{1}{2}\right) + 1 \cdot \left(\frac{1}{2}x_2^2 + x_2 + \frac{1}{2}\right) + 1, \\ 
\end{eqnarray*}

which only involves $3$ of the $15$ 1-dimensional circuit polynomials, one $0$-dimensional circuit polynomial, and no $2$-dimensional one. 


\section{Resume and Open Problems}
\label{Sec:Outlook}

In this article we have established a Positivstellensatz for SONC polynomials. This Positivstellensatz provides a new way to attack constrained polynomial optimization problems independent of SOS and SDP. Namely, it provides a converging hierarchy of lower bounds, which can be computed efficiently via relative entropy programming.

The first future task is to implement the program \eqref{Equ:OurRelativeEntropyProgram}, to test it for various instances of constrained polynomial optimization problems, and to compare the runtime and optimal values with the counterparts from SDP results using Lasserre relaxation. Given the runtime comparison of the SONC and the SOS approach in previous works \cite{Dressler:Iliman:deWolff,Ghasemi:Marshall:GP,Iliman:deWolff:Circuits} using geometric programming, there is reasonable hope that our relative entropy programs are faster than semidefinite programming in several cases. 

Second, an important problem is to establish statements which guarantee convergence of the bounds $f_{\rm sonc}^{(d,q)}$ after finitely many steps. Unfortunately, there is no obvious way to attack this problem, since similar statements for Lasserre relaxation (see e.g. \cite{Lasserre:BookNonnegativeApplications,Laurent:Survey}) cannot be proven with analogous methods for our Positivstellensatz straightforwardly.

Third, we have seen in Section \ref{SubSec:Example} that it can (and likely will often) happen that many of the circuit polynomials in $\supp(\Circ_{n,2d})$ are redundant for finding a representation of a given polynomial with respect to our Positivstellensatz \ref{Thm:OurPositivstellensatz}. Hence, the corresponding optimization problem \eqref{Equ:OurRelativeEntropyProgram} can be reduced in these cases. For practical applications, we have to develop strategies to restrict ourselves to useful subsets of circuit polynomials to reduce the runtime of our programs via reducing the number of variables.

Fourth, in our Positivstellensatz, Theorem \ref{Thm:OurPositivstellensatz}, it is a delicate open problem to analyze whether there always exists a decomposition with $q = 1$, which corresponds to a Putinar equivalent Positivstellensatz for SONC polynomials. If such a representation does not exist in general, then it would also be interesting to search for certain instances of polynomials for which there exists such a minimal representation.

Fifth, both the SONC cone itself and the connection between the SONC and the SAGE cone need to be analyzed more carefully. Important problems concern e.g. the boundary and the extreme rays of the SONC cone, and the question whether there is a primal-/dual-relation between the SAGE and the SONC cone. These questions will be discussed in a follow-up paper.

Sixth, we hope to find a way to combine SOS and SONC certificates in theory and practice.

\section*{Acknowledgements}

We thank the anonymous referees for their helpful comments.

\bibliographystyle{amsalpha}
\bibliography{../Positivstellensatz}

\end{document}